\documentclass[10pt,leqno,twoside]{amsart}
\setlength{\textwidth}{15.4cm}
\setlength{\textheight}{21.3cm}
\hoffset=-35pt

\numberwithin{equation}{section}
\usepackage{latexsym}
\usepackage{amsmath}
\usepackage{amssymb}
\usepackage{mathrsfs}
\usepackage{graphicx}
\usepackage{ifthen}
\usepackage[T1]{fontenc}
\usepackage[latin1]{inputenc}

\setcounter{tocdepth}{2}

\newtheorem{theorem}{Theorem}[section]
\newtheorem{proposition}[theorem]{Proposition}

\newtheorem{lemma}[theorem]{Lemma}

\theoremstyle{definition}

\theoremstyle{definition} 
\newtheorem{remark}[theorem]{Remark}






\newcommand{\cA}{\mathcal A}
\newcommand{\cB}{\mathcal B}
\newcommand{\cD}{\mathcal D}
\newcommand{\cE}{\mathcal E}

\newcommand{\R}{\mathbb R}

\newcommand{\N}{\mathbb N}

\newcommand{\sE}{{\sf E}}

\newcommand{\IP}{\mathbb P}



\DeclareMathSymbol{\complement}{\mathord}{AMSa}{"7B}


\def\vv<#1>{\langle #1\rangle}
\def\Vv<#1>{\bigl\langle #1\bigr\rangle}


\begin{document}


\title[Dynamics of Nematic Liquid Crystal Flows]
{Dynamics of Nematic Liquid Crystal Flows:\\ the Quasilinear Approach}

\author{M. Hieber}
\address{Technische Universit\"at Darmstadt\\
        Fachbereich Mathematik\\
        Schlossgarten-Strasse 7\\
        D-64289 Darmstadt, Germany}
\email{hieber@mathematik.tu-darmstadt.de}
\author{M. Nesensohn}
\address{Technische Universit\"at Darmstadt\\
        Fachbereich Mathematik\\
        Schlossgarten-Strasse 7\\
        D-64289 Darmstadt, Germany}
\email{nesensohn@mathematik.tu-darmstadt.de}
\author{J. Pr\"uss}
\address{Martin-Luther-Universit\"at Halle-Witten\-berg\\
         Institut f\"ur Mathematik \\
         Theodor-Lieser-Strasse 5\\
         D-06120 Halle, Germany}
\email{jan.pruess@mathematik.uni-halle.de}
\author{K. Schade}
\address{Technische Universit\"at Darmstadt\\
        Fachbereich Mathematik\\
        Schlossgarten-Strasse 7\\
        D-64289 Darmstadt, Germany}
\email{schade@mathematik.tu-darmstadt.de}

\subjclass[2000]{35Q35, 76A15, 76D03, 35K59}
\keywords{Nematic liquid crystals, quasilinear parabolic evolution equations, regularity, global solutions, convergence to equilibria}

\thanks{The second and fourth author are supported by the
DFG International Research Training Group 1529 on Mathematical Fluid Dynamics at
TU Darmstadt}

\begin{abstract}
  Consider the (simplified) Leslie-Erickson model for the flow of nematic liquid crystals in a bounded domain $\Omega \subset \mathbb{R}^n$ for $n > 1$. This article develops a complete dynamic theory for these equations, analyzing the system as a quasilinear parabolic evolution equation in an $L_p-L_q$-setting. First, the existence of a unique local strong solution is proved. This solution extends to a global strong solution, provided the initial data are close to an equilibrium or the solution is eventually bounded in the natural norm of the underlying state space. In this case the solution converges exponentially to an equilibrium. Moreover, the solution is shown to be real analytic, jointly in time and space.
\end{abstract}

\maketitle

\section{Introduction}

We consider the following system modeling the flow of nematic
liquid crystal materials in a bounded domain $\Omega \subset \mathbb{R}^n$
\begin{align}\label{eq:LCD}
\left\{
\begin{array}{rllll}
  \partial_tu + (u \cdot \nabla) u - \nu\Delta u + \nabla\pi &\!=\!&
-\lambda {\rm div}([\nabla d]^{\sf T}\nabla d) && \text{in } (0,T) \times \Omega,  \\
  \partial_td  + (u \cdot \nabla)d &\!=\!& \gamma(\Delta d +  |\nabla d|^2d) && \text{in } (0,T) \times \Omega, \\
  {\rm div}\, u &\!=\!& 0 && \text{in } (0,T) \times \Omega,\\
  (u,\partial_\nu d) &\!=\!& (0,0)&& \text{on } (0,T) \times \partial\Omega, \\
  (u,d)_{\vert t=0} &\!=\!& (u_0,d_0) && \text{in }   \Omega.
 \end{array}\right.
\end{align}
Here, the function $u: (0,\infty) \times \Omega \to \mathbb{R}^n$ describes the
velocity field, $\pi: (0,\infty) \times \Omega \to \mathbb{R}$ the pressure, and
$d:(0,\infty) \times \Omega \to \R^n$ represents the macroscopic molecular
orientation of the liquid crystal.  Due to the  physical interpretation  of $d$ it is natural
to impose the condition
\begin{equation}\label{deq1}
  |d|= 1  \qquad \text{in } (0,T) \times \Omega.
\end{equation}
We will show in the following that this condition is indeed preserved by
the above  system; see Proposition \ref{lengthdpreserved}
below for details.

The constants $\nu>0,\lambda>0$, and
$\gamma>0$ represent viscosity, the competition between kinetic energy and
potential energy and the microscopic elastic relaxation time for the molecular
orientation field, respectively. For simplicity, we set $\nu=\lambda=\gamma =1$,
which does not change our analysis.

The continuum theory of liquid crystals was developed by Ericksen and Leslie during the
1950's and 1960's in \cite{ericksen62,leslie68}. The Ericksen-Leslie theory is widely
used as a model for the flow of liquid crystals, see for example the survey articles by
Leslie in \cite{kinderlehrerericksen} and also
\cite{chandrasekhar, deGennes,hardtkinderlehrerlin86, lin89}.

The set of equations \eqref{eq:LCD} was considered  first in \cite{linliu95}, however for
the situation where in the second equation of \eqref{eq:LCD} the term $|\nabla d|^2d$
is replaced by $f(d) = \nabla F(d)$, i.e.
 \begin{align*}
 d_t + (u \cdot \nabla) d = \gamma(\Delta d - f(d)),
\end{align*}
where $F:\R^3 \to \R$ is a smooth, bounded function. Note that in this situation, the
condition \eqref{deq1} {\it cannot} be preserved in general. Thus, this condition was replaced
in  \cite{lin89} and \cite{linliu95} by the Ginzburg-Landau energy functional, i.e.
$f$ is assumed to satisfy $f(d) = \nabla F(d) = \nabla \frac{1}{\varepsilon^2}(|d|^2-1)^2$.
In 1995, Lin and Liu \cite{linliu95} proved  the existence of global weak solutions
to \eqref{eq:LCD} in dimension $2$ or $3$ under the assumptions that
$u_0 \in L_2(\Omega), d_0 \in H^1(\Omega)$, and $d_0 \in H^{3/2}(\partial\Omega)$.
Existence and uniqueness of global classical solutions was also obtained by them in dimension
$2$ provided $u_0 \in H^1(\Omega), d_0 \in H^2(\Omega)$, and provided the viscosity $\nu$
is large in dimension $3$. For regularity results of weak solutions
in the spirit of Caffarelli-Kohn-Nirenberg we refer to
\cite{linliu96}.

Hu and Wang \cite{huwang10} considered in 2010 the case of $f(d)=0$ and proved
existence and uniqueness of a global strong solution for small initial data in this case.
They proved moreover that whenever a strong solutions exist, all global weak solutions
as constructed in \cite{linliu95} must be equal to this strong solution.
The idea of their  approach was to consider the above system  \eqref{eq:LCD} as a semilinear
equation with a forcing  term $\lambda {\rm div}([\nabla d]^{\sf T}\nabla d)$ on the
right-hand side.

The full system  \eqref{eq:LCD} with $f(d) = |\nabla{d}|^2 d$ was revisited by
Lin, Lin, and Wang in 2010. They proved in \cite{linlinwang10} interior and boundary
regularity theorems under smallness condition in dimension $2$ and established
the  existence of global weak solutions on bounded smooth domains $\Omega \subset \R^2$
that are smooth away from a finite set. Furthermore, Wang proved in~\cite{Wan11} global
well-posedness for this system for initial data being small in $BMO^{-1}\times BMO$
in the case of a whole space, i.e. $\Omega =\R^n$, by combining techniques of
Koch and Tataru with methods from harmonic maps to certain Riemannian manifolds. 

For results on the compressible case we refer to \cite{HWW12, Ma13}. Here, local existence of
strong solutions is proved. The latter turn out to be even local classical solution.

Summarizing, we observe that in particular results for local as well as global, strong solutions in the three
dimensional setting for the full system \eqref{eq:LCD}, obeying
also the condition \eqref{deq1}, do not seem to exist so far.

Recently, Li and Wang claimed in \cite{li-wang} such  a result. More precisely, they claimed
the existence and uniqueness of a strong solutions to \eqref{eq:LCD} in bounded,
smooth domains (however, not satisfying \eqref{deq1}). Their idea was to rewrite \eqref{eq:LCD}
as a {\it semilinear} equation for the Stokes equation coupled to the heat equation with a right hand side of the form
\begin{align*}
\widetilde{F}(u,d):= \left(-(u\cdot\nabla)u -
{\rm div}([\nabla d]^{\sf T}\nabla d), -(u\cdot\nabla)d+|\nabla{d}|^2d\right).
\end{align*}
Unfortunately, their approach and their main result \cite[Theorem2.1]{li-wang} relies on an
{incorrect} regularity property  for the solution of the heat equation
\cite[Theorem 3.1]{li-wang}. This result would imply further regularity properties for $d$ and
hence for $\widetilde{F}(u,d)$, which however are not true. Note that the (incorrect)
assertion of \cite[Theorem 3.1]{li-wang} is crucial for their approach. Thus, the
theory for local as well as for global strong solutions to \eqref{eq:LCD},  also satisfying
\eqref{deq1}, needs clarification.

It is the aim of this paper to present a complete theory for global strong
solutions to \eqref{eq:LCD} satisfying \eqref{deq1} as well as for their dynamical
behaviour in the $n$-dimensional setting, where $n > 1$.

Our main idea is to consider \eqref{eq:LCD} not as a semilinear equation as done
in all of the previous approaches but as a  {\it quasilinear} evolution
equation. We thus  incorporate the term ${\rm div}([\nabla d]^{\sf T}\nabla d)$
into the quasilinear  operator $A$ given by
\begin{align*}
A(d)=\left[\begin{array}{cc} \cA & \IP \cB(d)\\
                                 0 & \cD\end{array}\right],
\end{align*}
where $\cA$ denotes the Stokes operator, $\cD$ the Neumann Laplacian, and $\cB$
is given by
\begin{align*}
[\cB(d) h]_i := \partial_i d_l \Delta h_l +\partial_k d_l \partial_k\partial_i h_l,
\end{align*}
for which we employ the sum convention. Note that $\mathcal{B}(d)d = {\rm div}([\nabla d]^{\sf T}\nabla d)$.

We then develop a complete dynamic theory for~\eqref{eq:LCD}--\eqref{deq1}.
In fact, first by local existence theory for abstract quasilinear parabolic problems,
we prove the existence and uniqueness of a strong solution to \eqref{eq:LCD}--\eqref{deq1}
on a maximal time interval. Thus,  \eqref{eq:LCD}--\eqref{deq1} give rise to a
local semi-flow in the natural state space.

Furthermore, the equilibria $\cE$ of~\eqref{eq:LCD}--\eqref{deq1} are determined to be
 \begin{align*}
        \cE=\{(0,d_*): \; d_*\in \R^n,\, |d_*|=1\},
 \end{align*}
and the well-known energy functional
$$
{\sf E}=\frac{1}{2} \int_\Omega [|u|^2 + |\nabla d|^2] dx
$$
for~\eqref{eq:LCD}--\eqref{deq1} is shown to be a strict Ljapunov-functional.
In addition, the equilibria are shown to be normally stable, i.e.\ for an initial value close
to $\cE$, the solution of \eqref{eq:LCD}--\eqref{deq1} exists globally and
the solution converges exponentially to an equilibrium. More generally, a solution,
eventually bounded on its maximal interval of existence, exists
globally and converges to an equilibrium exponentially fast.

Due to the polynomial character of the nonlinearities, we can even show that the solution 
of \eqref{eq:LCD}--\eqref{deq1} is real analytic, jointly in time and space.

Our approach is based on the theory of quasilinear parabolic problems and
relies in particular  on the maximal $L_p$-regularity property for the heat and the
Stokes equation. In particular, we refer here to \cite{Ama95, Ama00, DHP03, clementli, KPW09, Pru03, PrSi04, psz09}.

The plan for this paper is as follows.
We begin by collecting general results from the theory of quasilinear
parabolic evolution equations. Then, in Section \ref{abstractreformulation}
we introduce our formulation of \eqref{eq:LCD}. Section~\ref{sec:local} deals with
local well-posedness and regularity of solutions to \eqref{eq:LCD}--\eqref{deq1};
in particular we see that the solution is real analytic.
The generalized principle of linearized stability yields the stability of
equilibria and convergence of solutions is proved in Section \ref{stabilityofequilibria}. Moreover,
by means of the associated energy functional, we prove convergence of a solution to an equilibrium, whenever the solution is eventually bounded
in the natural state space.

\section{Quasilinear Evolution Equations} \label{toolbox}

Let $X_0$ and $X_1$ be Banach spaces such that $X_1\overset{d}{\hookrightarrow} X_0$,
i.e.\@ $X_1$ is continuously and densely embedded in $X_0$. Let $J=[0,a]$ for an $a>0$.
By a {\em quasilinear autonomous parabolic evolution equation} we understand an equation
of the form
\begin{align}\label{eq:qlproblem}\tag{QL}
 \dot{z}(t) + A(z(t))z(t) = F(z(t)),\quad t \in J, \quad z(0)=z_0,
\end{align}
where $A$ is a mapping from a real interpolation space $X_{\gamma,\mu}$ with suitable weights
between $X_0$ and $X_1$ into ${\mathcal L}(X_0,X_1)$. Our approach relies  on the maximal
$L_p$-regularity of $A(v)$ for $v \in X_{\gamma,\mu}$. For details we refer e.g. to
\cite{DHP03}.

The equation \eqref{eq:qlproblem} is investigated in spaces of the form $L_p(J;X_0)$
with temporal weights. More
precisely, for $p \in (1, \infty)$ and $\mu\in(1/p,1]$, the spaces $L_{p,\mu}$ and $H^1_{p,\mu}$ are defined by
\begin{align*}
    L_{p,\mu}(J;X_1) &:= \{ z\colon J \to X_1\colon t^{1-\mu}z\in L_p(J;X_1) \},
    \\H^1_{p,\mu}(J;X_0) &:= \{ z\in L_{p,\mu}(J;X_0) \cap W^1_1(J;X_0) \colon \dot{z} \in L_{p,\mu}(J;X_0) \}.
\end{align*}
It is clear, that
\begin{align*}
 L_p(J;X) \hookrightarrow L_{p,\mu}(J;X) \quad \text{and} \quad L_p([0,a];X)  \hookrightarrow L_{p,\mu}([\tau,a];X),
\end{align*}
for all Banach spaces $X$ and $\tau \in (0,a)$. 
It has been shown in \cite[Theorem 2.4]{PrSi04} that $L_p$-maximal regularity implies
also $L_{p,\mu}$-maximal regularity, provided $p\in (1, \infty)$ and $\mu \in (1/p, 1]$.
The trace space of the maximal regularity class containing temporal weights,
\begin{align*}z\in H^1_{p,\mu}(J;X_0)\cap L_{p,\mu}(J;X_1)\end{align*}
has been characterized in \cite[Theorem 2.4]{PrSi04}  as
\begin{align*}
  X_{\gamma,\mu}=(X_0,X_1)_{\mu-1/p,p},
\end{align*}
provided $p\in (1, \infty)$ and $\mu \in (1/p, 1]$; see also \cite[Theorem 4.2]{MS12}.

We now impose precise  assumptions on $A$ and $F$.
\begin{itemize}
\item[(A)] $A\in C^\omega(X_{\gamma,\mu}; \mathcal{L}(X_0, X_1))$, and
$A(v)$ has maximal $L_p$-regularity for each $v \in X_{\gamma,\mu}$.
\item[(F)] $F \in C^\omega(X_{\gamma, \mu}; X_0)$.
\end{itemize}
Even under less restrictive Lipschitz type assumptions on $A$ and $F$, local in time
existence of~\eqref{eq:qlproblem} was shown by Cl\'{e}ment-Li~\cite{clementli} in the
case $\mu = 1$ and by K\"ohne-Pr\"uss-Wilke \cite[Theorem 2.1, Corollary 2.2]{KPW09} for
the case $\mu \in (1/p, 1]$.

\begin{proposition}
\label{clementli}
Let $1<p<\infty$, $\mu \in (1/p,1]$, $z_0 \in X_{\gamma, \mu}$, and suppose that
the assumptions (A) and (F) are satisfied. Then, there exists $a > 0$, such that
\eqref{eq:qlproblem} admits a unique solution $z$ on $J = [0,a]$ in the regularity class
\begin{align*}
    z \in H^1_{p, \mu}(J; X_0) \cap L_{p,\mu}(J; X_1) \hookrightarrow
C(J; X_{\gamma, \mu})\cap C( (0,a]; X_\gamma).
\end{align*}
The solution depends continuously on $z_0$, and can be extended to a maximal interval of
existence $J(z_0) = [0, t^+(z_0))$.
\end{proposition}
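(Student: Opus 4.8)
The plan is to reduce \eqref{eq:qlproblem} to a fixed-point problem governed by the maximal regularity of the operator obtained by freezing the coefficient at the initial value. Set $A_0 := A(z_0)$; by assumption (A) this operator enjoys maximal $L_p$-regularity, and by the transference result \cite[Theorem 2.4]{PrSi04} it also has maximal $L_{p,\mu}$-regularity on $J=[0,a]$ for every $\mu\in(1/p,1]$. Writing $\E_\mu(a):=H^1_{p,\mu}(J;X_0)\cap L_{p,\mu}(J;X_1)$ for the maximal regularity class, I would rewrite \eqref{eq:qlproblem} as the semilinear problem
\begin{align*}
 \dot z(t) + A_0 z(t) = (A_0 - A(z(t)))z(t) + F(z(t)), \quad t\in J,\quad z(0)=z_0,
\end{align*}
and let $z_*\in\E_\mu(a)$ denote the solution of the linear problem with frozen right-hand side $F(z_0)$ and initial value $z_0$, which exists and is unique by maximal regularity.

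First I would fix a small radius $r>0$ and consider the closed ball
\begin{align*}
 \Sigma := \{\, z\in\E_\mu(a)\st z(0)=z_0,\ \|z - z_*\|_{\E_\mu(a)}\le r\,\},
\end{align*}
on which $\Phi(z)$ is defined as the unique solution $w\in\E_\mu(a)$ of $\dot w + A_0 w = (A_0 - A(z))z + F(z)$, $w(0)=z_0$; the fixed points of $\Phi$ in $\Sigma$ are exactly the solutions of \eqref{eq:qlproblem} near $z_*$. By maximal $L_{p,\mu}$-regularity of $A_0$, and since $\Phi(z)-z_*$ has zero initial trace, there is a constant $M$ with
\begin{align*}
 \|\Phi(z)-z_*\|_{\E_\mu(a)} \le M\,\big\|(A_0 - A(z))z + F(z) - F(z_0)\big\|_{L_{p,\mu}(J;X_0)},
\end{align*}
together with an analogous bound for $\|\Phi(z_1)-\Phi(z_2)\|$. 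The decisive tool is the embedding $\E_\mu(a)\ein C(J;X_{\gamma,\mu})$, which controls $\|z(t)-z_0\|_{X_{\gamma,\mu}}$ by $C\,r$ plus $\sup_t\|z_*(t)-z_0\|_{X_{\gamma,\mu}}$; since $z_*\in C(J;X_{\gamma,\mu})$ with $z_*(0)=z_0$, the latter supremum tends to $0$ as $a\to0$. Combined with the local Lipschitz continuity of $A$ and $F$ on $X_{\gamma,\mu}$ (a consequence of (A) and (F)), this makes $\|A_0 - A(z(t))\|_{\mathcal{L}(X_1,X_0)}$ and the increment $F(z)-F(z_0)$ small in the relevant norms.

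I would then choose $r$ small, and afterwards $a$ small, so that $\Phi$ maps $\Sigma$ into itself and is a strict contraction; the Banach fixed-point theorem yields a unique solution $z\in\Sigma$. The step I expect to be the main obstacle is making these estimates genuinely small \emph{uniformly in} the interval length: one must verify that the maximal regularity constant $M$ for $A_0$ on the trace-zero class can be taken independent of $a\in(0,a_0]$, and that neither the embedding constant of the trace-zero subspace of $\E_\mu(a)$ nor the temporal weight $t^{1-\mu}$ entering the $L_{p,\mu}$-norm spoils the smallness as $a\to0$. This is precisely where the weighted trace theory of \cite[Theorem 2.4]{PrSi04} and the fine structure of $\E_\mu(a)$ are needed. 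Uniqueness beyond $\Sigma$ then follows by comparing two solutions on a possibly smaller common interval, continuous dependence on $z_0$ from the uniform contraction estimates (the datum $z_0$ enters only through $z_*$ and the centre of $\Sigma$), and the extension to a maximal interval $J(z_0)=[0,t^+(z_0))$ by the standard continuation argument, taking the union of all solution intervals.
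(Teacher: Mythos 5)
Your proposal is correct and follows essentially the same route as the proof the paper relies on: the paper gives no proof of Proposition~\ref{clementli} itself, but cites Cl\'ement--Li and K\"ohne--Pr\"uss--Wilke \cite[Theorem 2.1, Corollary 2.2]{KPW09}, whose argument is precisely your frozen-coefficient contraction scheme -- maximal $L_{p,\mu}$-regularity of $A(z_0)$, a ball around a reference solution in the weighted maximal regularity class, smallness via the embedding into $C(J;X_{\gamma,\mu})$ and local Lipschitz continuity of $A$ and $F$, with the constants taken uniform in the interval length on the trace-zero subspace. You also correctly flag the one genuinely delicate point (uniformity of the maximal regularity and embedding constants as $a\to 0$), so there is no gap worth noting.
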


Parabolic problems allow for additional smoothing effects.
In this respect, a method due to Angenent \cite{Ang90} is well known. We only state here a
variant of it which is adapted to \eqref{eq:qlproblem};
see \cite[Theorem 5.1]{Pru03} for the case $\mu = 1$. By a slight adjustment
of its  proof to the situation of temporal weights, this result remains true also for
maximal regularity classes using this type of weights.

\begin{proposition} \label{qlregularity}
Let $1<p<\infty$, $\mu\in(1/p,1]$, $a>0$, and assume that (A) and (F) hold.
Let $z \in H^1_{p, \mu}(J; X_0) \cap L_{p,\mu}(J; X_1)$ be a solution of~\eqref{eq:qlproblem}
on $J = [0,a]$. Then
\begin{align*}
   t^k [\frac{d}{dt}]^k z \in  H^1_{p,\mu}(J;X_0)\cap L_{p,\mu}(J;X_1), \quad k \in \mathbb{N}.
\end{align*}
Furthermore, $z$ is real analytic with values in $X_1$ on $(0,a)$.
\end{proposition}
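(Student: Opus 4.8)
The plan is to adapt Angenent's parameter trick \cite{Ang90} to the weighted maximal regularity setting. Writing $\mathbb{E}_1(J):=H^1_{p,\mu}(J;X_0)\cap L_{p,\mu}(J;X_1)$ and $\mathbb{E}_0(J):=L_{p,\mu}(J;X_0)$, I would first introduce, for $\lambda$ in a small interval $(1-\delta,1+\delta)$, the time-rescaled function $z_\lambda(t):=z(\lambda t)$, $t\in J$. A direct computation shows that $z_\lambda$ solves the parametrized quasilinear problem
\[
\dot z_\lambda(t)+\lambda A(z_\lambda(t))z_\lambda(t)=\lambda F(z_\lambda(t)),\qquad z_\lambda(0)=z_0,
\]
with $z_1=z$. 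The goal is to show that $\lambda\mapsto z_\lambda$ is real analytic with values in $\mathbb{E}_1(J)$; both assertions then follow by differentiating in $\lambda$ at $\lambda=1$.

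To this end I would consider the map
\[
\Phi(\lambda,w):=\bigl(\dot w+\lambda A(w)w-\lambda F(w),\,w(0)-z_0\bigr),
\]
defined on $(1-\delta,1+\delta)\times\mathbb{E}_1(J)$ with values in $\mathbb{E}_0(J)\times X_{\gamma,\mu}$; the trace space $X_{\gamma,\mu}$ is the correct second component by the characterization in \cite[Theorem 2.4]{PrSi04}, and the embedding $\mathbb{E}_1(J)\hookrightarrow C(J;X_{\gamma,\mu})$ from \propref{clementli} makes $A(w(\cdot))$ meaningful. Using hypotheses (A) and (F), the substitution operators $w\mapsto A(w(\cdot))$ and $w\mapsto F(w(\cdot))$ are real analytic from $C(J;X_{\gamma,\mu})$ into $C(J;\mathcal{L}(X_0,X_1))$ and $C(J;X_0)$ respectively, pointwise multiplication $\mathcal{L}(X_0,X_1)\times X_1\to X_1$ is bounded bilinear, and $\lambda$ enters polynomially; hence $\Phi\in C^\omega$. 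Since $z$ solves \eqref{eq:qlproblem}, we have $\Phi(1,z)=0$.

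The crucial step is to verify that the partial derivative $D_w\Phi(1,z)\in\mathcal{L}(\mathbb{E}_1(J),\mathbb{E}_0(J)\times X_{\gamma,\mu})$, namely
\[
D_w\Phi(1,z)h=\bigl(\dot h+A(z(\cdot))h+[A'(z(\cdot))h]z(\cdot)-F'(z(\cdot))h,\,h(0)\bigr),
\]
is a topological isomorphism. This reduces to maximal $L_{p,\mu}$-regularity for the nonautonomous linear problem with principal part $\partial_t+A(z(\cdot))$. Since $z\in C(J;X_{\gamma,\mu})$ by \propref{clementli} and $A\in C^\omega(X_{\gamma,\mu};\mathcal{L}(X_0,X_1))$, the coefficient $t\mapsto A(z(t))$ is continuous into $\mathcal{L}(X_0,X_1)$; as each $A(v)$ has maximal $L_p$-regularity by (A), which upgrades to maximal $L_{p,\mu}$-regularity by \cite[Theorem 2.4]{PrSi04}, a standard freezing-of-coefficients argument yields maximal $L_{p,\mu}$-regularity for $\partial_t+A(z(\cdot))$. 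The remaining terms $[A'(z)h]z$ and $F'(z)h$ are lower-order perturbations controlled through the trace-level norm of $h$, hence have small relative bound on short subintervals, so they are absorbed by a Neumann series before patching the estimate to all of $J$. I expect this to be the main obstacle, as it is where the temporal weights must be handled with care and where only trace-level continuity of $z$ (in $X_{\gamma,\mu}$ rather than in $X_1$) is available.

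Granting the isomorphism, the analytic implicit function theorem provides a real-analytic branch $\lambda\mapsto\bar z_\lambda\in\mathbb{E}_1(J)$ near $\lambda=1$ with $\Phi(\lambda,\bar z_\lambda)=0$ and $\bar z_1=z$; by uniqueness in \propref{clementli} we identify $\bar z_\lambda=z_\lambda=z(\lambda\,\cdot)$. Differentiating $z_\lambda(t)=z(\lambda t)$ $k$ times in $\lambda$ at $\lambda=1$ gives $\partial_\lambda^k z_\lambda|_{\lambda=1}=t^k[\frac{d}{dt}]^k z$, and since real analyticity of $\lambda\mapsto z_\lambda$ into $\mathbb{E}_1(J)$ forces every $\lambda$-derivative to lie in $\mathbb{E}_1(J)$, the first assertion follows; the analytic implicit function theorem moreover yields quantitative bounds $\|t^k[\frac{d}{dt}]^k z\|_{\mathbb{E}_1(J)}\le C M^k k!$. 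To deduce temporal analyticity with values in $X_1$, I would fix $t_0\in(0,a)$ and $\tau\in(0,t_0)$; on $[\tau,a]$ the weight $t^{1-\mu}$ is inert, so $\mathbb{E}_1$ reduces to $H^1_p([\tau,a];X_0)\cap L_p([\tau,a];X_1)$, and the smoothing applied to consecutive orders gives $z^{(k)}\in W^1_p([\tau,a];X_1)\hookrightarrow C([\tau,a];X_1)$ with $\|z^{(k)}(t_0)\|_{X_1}\le C'(M')^k k!$. These estimates make the Taylor series $\sum_k \frac{(t-t_0)^k}{k!}z^{(k)}(t_0)$ converge in $X_1$ for $t$ near $t_0$, proving that $z$ is real analytic on $(0,a)$ with values in $X_1$.
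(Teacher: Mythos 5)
Your proposal is correct and follows essentially the same route as the paper: the paper gives no written proof but invokes Angenent's parameter trick via \cite[Theorem 5.1]{Pru03} (the case $\mu=1$), remarking that its proof adapts to temporal weights, and your implicit-function-theorem argument with the rescaling $z_\lambda(t)=z(\lambda t)$, the weighted nonautonomous maximal regularity for the linearization, and the Cauchy-estimate/Taylor-series conclusion is exactly that adaptation. The only point to tidy up is that $z(\lambda t)$ is undefined for $\lambda>1$ and $t$ near $a$; one either works on subintervals $[0,b]$ with $(1+\delta)b<a$ (which suffices, since analyticity on $(0,a)$ is local) or first extends $z$ slightly beyond $a$, which is possible because $z(a)\in X_\gamma$ by instantaneous regularization and Proposition~\ref{clementli} applies with initial value $z(a)$.
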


We denote the set of equilibria of~\eqref{eq:qlproblem} by
\begin{align*}
  \mathcal{E} = \{ z_* \in X_1\colon A(z_*)z_* = F(z_*) \}.
\end{align*}
The following result on global existence and stability of was proved in
\cite[Theorem 2.1]{psz09} assuming only that $A$ and $F$ are of class $C^1$.

\begin{proposition}\label{thm:global1}
Let $1<p<\infty$ and assume that assumptions (A) and (F) hold.  Let $A_0$ be the
linearization of~\eqref{eq:qlproblem}, i.e. let
\begin{align*}
        A_0 w= A(z_*)w + (A'(u_*)w)u_* - F'(u_*)w, \quad w\in X_1.
\end{align*}
Suppose that $u_* \in \mathcal{E}$ is normally stable equilibrium, i.e.
\begin{itemize}
\item[(i)] near $u_*$ the set of equilibria $\mathcal{E} \subset X_1$ is a
$C^1$-manifold in $X_1$ of dimension $m\in \mathbb{N}_0$,
\item[(ii)] the tangent space of $\mathcal{E}$ at $u_*$ is given by $N(A_0)$,
\item[(iii)] 0 is semi-simple eigenvalue of $A_0$, i.e. $N(A_0) \oplus R(A_0) = X_0$,
\item[(iv)] $\sigma(A_0)\setminus \{0\} \subset \mathbb{C}_+ =
\{x \in \mathbb{C}\colon \operatorname{Re} x > 0\}$.
\end{itemize}
Then $u_*$ is stable in $X_\gamma$.  Further, there exists  a number $\rho > 0$ such
that the unique solution $z$ of~\eqref{eq:qlproblem} with initial value
$z \in B_{X_\gamma}(0,\rho)$ exists on $\mathbb{R}_+$ and converges at an
exponential rate to some $u_\infty \in \mathcal{E}$ in $X_\gamma$ as $t\to\infty$.
\end{proposition}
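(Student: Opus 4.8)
The plan is to prove this via the center-manifold-free reduction of Pr\"uss--Simonett--Zacher, exploiting that under normal stability the neutral spectral subspace of $A_0$ coincides exactly with the tangent space of the equilibrium manifold. First I would shift the equilibrium to the origin: writing $z = u_* + v$ and using $A(u_*)u_* = F(u_*)$, the problem \eqref{eq:qlproblem} becomes
\[
\dot v + A_0 v = R(v), \qquad v(0) = v_0,
\]
where $R$ collects the remainder,
\[
R(v) = A_0 v - \big[A(u_*+v)(u_*+v) - A(u_*)u_*\big] + \big[F(u_*+v) - F(u_*)\big].
\]
By (A) and (F) one has $R \in C^\omega$ with $R(0) = 0$ and $R'(0) = 0$, so $R$ is genuinely of higher order. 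Next, since $0$ is a semisimple eigenvalue (iii), I introduce the spectral projection $P_c$ onto the finite-dimensional kernel $N(A_0)$ and set $P_s = I - P_c$, so that $X_0 = N(A_0) \oplus R(A_0)$, with a corresponding splitting of $X_1$ and $X_\gamma$. On the stable part, $A_s := A_0|_{R(A_0)}$ satisfies $\sigma(A_s) = \sigma(A_0)\setminus\{0\} \subset \mathbb{C}_+$ by (iv); combined with the maximal $L_p$-regularity from (A), $-A_s$ generates an exponentially stable analytic semigroup, with decay $e^{-\eta t}$ for some $\eta > 0$ below the spectral gap.

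Second, I would parametrize the equilibria. By (i)--(ii) there is a $C^1$-map $\psi$ from a ball in $N(A_0)$ into $R(A_0) \cap X_1$ with $\psi(0) = 0$ and $\psi'(0) = 0$, whose graph $\{x + \psi(x)\}$ is exactly $\mathcal{E} - u_*$ near $0$. Splitting $v = x + y$ with $x = P_c v \in N(A_0)$ and $y = P_s v \in R(A_0)$, and passing to the shifted coordinate $\bar y := y - \psi(x)$ which vanishes precisely on the manifold, I obtain a system whose linear part decouples,
\[
\dot x = T(x, \bar y), \qquad \dot{\bar y} + A_s \bar y = S(x, \bar y),
\]
with $T(x,0) = 0$ and $S(x,0) = 0$ because the manifold consists of equilibria, and $T, S$ both higher order near the origin. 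This is the structural heart of the argument: the neutral directions carry no linear dynamics, and the full nonlinearity degenerates on $\mathcal{E}$, so no separate center manifold need be constructed.

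Third, I would solve this system globally with decay. Working in an exponentially weighted maximal-regularity class on $\mathbb{R}_+$ (functions $e^{\eta t}v$ lying in $H^1_{p,\mu}\cap L_{p,\mu}$), I would use maximal $L_{p,\mu}$-regularity of $A_s$ together with the smallness of $S$ to close a contraction mapping, producing a unique global $\bar y$ with $\|\bar y(t)\|_{X_\gamma} \le C e^{-\eta t}$ for $v_0$ small. Feeding this into the $x$-equation, the right-hand side $T(x,\bar y)$ is integrable on $\mathbb{R}_+$ (being higher order with $\bar y$ exponentially small), so $x(t)$ converges to some $x_\infty$ at rate $e^{-\eta t}$. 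Undoing the coordinate changes yields $z(t) \to u_\infty := u_* + x_\infty + \psi(x_\infty) \in \mathcal{E}$ exponentially in $X_\gamma$, while confining the whole trajectory to a small ball gives the stability assertion.

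The main obstacle I anticipate is precisely this coupled global solvability step. The neutral mode $x$ does not decay on its own --- it drifts along $\mathcal{E}$ --- so one must simultaneously close the contraction for $\bar y$ and control the drift of $x$ so that it neither leaves the chart of $\psi$ nor feeds destabilizing terms back into the $\bar y$-equation. Success hinges on choosing the weight $\eta$, the radius $\rho$, and the exponents $(p,\mu)$ compatibly, and crucially on the genuinely higher-order character of $T$ and $S$, which itself rests on the $C^1$-manifold structure (i)--(ii) and the $C^\omega$-regularity assumed in (A) and (F).
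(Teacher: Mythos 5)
The paper does not prove this proposition itself; it is quoted verbatim from Pr\"uss--Simonett--Zacher \cite[Theorem 2.1]{psz09}, and your proposal is exactly the argument of that cited work: shift to the equilibrium, spectral splitting $X_0 = N(A_0)\oplus R(A_0)$ via semisimplicity, the chart $\psi$ with $\psi(0)=0$, $\psi'(0)=0$ for $\mathcal{E}$, the normal-form coordinates in which $T(x,0)=S(x,0)=0$, and a contraction in exponentially weighted maximal-regularity spaces. Your structural claims check out (in particular $R(0)=0$, $R'(0)=0$, and the degeneracy of the nonlinearity on $\mathcal{E}$), so the proposal is correct and takes essentially the same route as the proof the paper relies on.
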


We finish the section with another result on global existence result for~\eqref{eq:qlproblem}; see
\cite[Theorem 3.1]{KPW09}.

\begin{proposition}\label{thm:global2}
Let $1<p<\infty$, $\mu \in (1/p,1]$, $z_0 \in X_{\gamma, \mu}$ and let
$J = [0,a]$ or $J = \mathbb{R}_+$. Suppose that assumptions (A) and (F) are
satisfied and that the embedding $X_{\gamma, \mu} \overset{c}{\hookrightarrow} X_\gamma$
is compact. Assume furthermore that the solution $z$ of~\eqref{eq:qlproblem} is eventually bounded in $ X_{\gamma}$ on its maximal interval of existence, i.e. that $z$ satisfies
  \begin{align*}
    z \in BC([\tau, t^+(z_0)); X_{\gamma})
  \end{align*}
  for some $\tau \in (0,t^+(z_0))$. Then the solution $z$ exists globally and for each
$\delta >0$, the orbit $\{z(t)\}_{t \geq \delta}$ is relatively compact in $X_\gamma$. If in addition $z_0 \in X_\gamma$, then $\{z(t)\}_{t \geq 0}$ is relatively compact in $X_\gamma$.
\end{proposition}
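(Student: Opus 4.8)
The plan is to argue the global existence statement by contradiction and to extract the asserted compactness of the orbit by a moving--window regularity argument; in both parts the engine is the maximal $L_p$-regularity granted by assumption (A) together with the local theory of \propref{clementli}. The crucial preliminary reduction is a \emph{uniform local existence time}: the continuous dependence on the datum in \propref{clementli} makes the length $t^+(z_0)$ lower semicontinuous in $z_0$, hence bounded below by some $a(K)>0$ on every relatively compact set $K\subset X_{\gamma,\mu}$. Granting this, the entire global existence claim reduces to showing that, under the hypotheses, the orbit $\{z(t):t\in[\tau,t^+(z_0))\}$ is relatively compact in the restart space $X_{\gamma,\mu}$.

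To establish this precompactness I would combine the eventual boundedness $z\in BC([\tau,t^+(z_0));X_\gamma)$ with the compactness assumption, which yields that $X_\gamma$-bounded sets are relatively compact in $X_{\gamma,\mu}$; thus the bounded orbit has relatively compact closure $K\subset X_{\gamma,\mu}$. Now suppose $t^+(z_0)<\infty$. Choosing $t$ with $t^+(z_0)-t<a(K)/2$ and restarting \eqref{eq:qlproblem} from the datum $z(t)\in K$, the uniform existence time produces a solution on $[t,t+a(K)]$ which, by uniqueness, prolongs $z$ strictly beyond $t^+(z_0)$. This contradicts maximality of the interval of existence, so $t^+(z_0)=\infty$.

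It remains to prove relative compactness of $\{z(t)\}_{t\geq\delta}$ in $X_\gamma$. Here I would freeze the problem on windows of fixed length: for a sequence $t_n\to\infty$ set $w_n(s):=z(t_n+s)$ on $s\in[-\delta,\delta]$. The a priori bound in $X_\gamma$ controls $F(z(\cdot))$ in $X_0$ and the coefficients $A(z(\cdot))$, so maximal $L_p$-regularity applied on each window (passing from the weighted class $L_{p,\mu}$ to the unweighted $L_p$ since the windows are bounded away from $0$) bounds $\{w_n\}$ in $H^1_p((-\delta,\delta);X_0)\cap L_p((-\delta,\delta);X_1)$ uniformly in $n$. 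Since $X_1\overset{c}{\hookrightarrow}X_\gamma$ and $X_\gamma\hookrightarrow X_0$, the Aubin--Lions--Simon lemma gives $H^1_p((-\delta,\delta);X_0)\cap L_p((-\delta,\delta);X_1)\overset{c}{\hookrightarrow}C([-\delta,\delta];X_\gamma)$; evaluating the extracted limit at $s=0$ shows $\{z(t_n)\}$ is precompact in $X_\gamma$. When in addition $z_0\in X_\gamma$ one has $z\in C([0,\infty);X_\gamma)$, so $\{z(t)\}_{0\le t\le\delta}$ is compact as the continuous image of $[0,\delta]$, and its union with the precompact tail gives relative compactness of the full orbit.

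The main obstacle is the quasilinear coupling, which enters both the restart and the window estimates: maximal $L_p$-regularity is available only for the \emph{frozen} operators $A(z(t_0))$, not for the genuinely time-dependent $A(z(t))$. To use it I must localize to time intervals so short that, by continuity of $v\mapsto A(v)$ in assumption (A) and of $t\mapsto z(t)$ in $X_\gamma$, the difference $A(z(t))-A(z(t_0))$ is small in $\mathcal{L}(X_1,X_0)$; the term $(A(z(t_0))-A(z(t)))z$ is then absorbed as a Neumann-type perturbation, leaving a forcing controlled by the $X_\gamma$-bound. Making these estimates uniform along the orbit, together with the bookkeeping of the temporal weight $\mu$ when transferring between $L_{p,\mu}$ and $L_p$ on subintervals, is where the real work lies; by comparison the compactness and restart steps are soft. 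This is precisely the argument carried out in \cite[Theorem 3.1]{KPW09}.
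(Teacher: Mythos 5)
Your global-existence half is sound and is essentially the argument of \cite[Theorem 3.1]{KPW09}, which the paper cites in lieu of a proof: a covering/continuous-dependence argument gives a uniform positive existence time on compact subsets of $X_{\gamma,\mu}$; the bound in $BC([\tau,t^+(z_0));X_\gamma)$ together with the compact embedding (which must indeed be read as $X_\gamma\overset{c}{\hookrightarrow}X_{\gamma,\mu}$, as you do --- the statement's ``$X_{\gamma,\mu}\overset{c}{\hookrightarrow}X_\gamma$'' is an oversight, since $X_\gamma\subset X_{\gamma,\mu}$ for $\mu<1$) makes the orbit relatively compact in $X_{\gamma,\mu}$; and the restart argument excludes $t^+(z_0)<\infty$. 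The compactness half, however, contains a genuine gap: the embedding you invoke, $H^1_p((-\delta,\delta);X_0)\cap L_p((-\delta,\delta);X_1)\overset{c}{\hookrightarrow}C([-\delta,\delta];X_\gamma)$, is \emph{false} at the exact trace space $X_\gamma=(X_0,X_1)_{1-1/p,p}$, and so is compactness of the evaluation $w\mapsto w(0)$ at the center of the window. Counterexample: let $X_0$ be a Hilbert space, $A$ positive self-adjoint with compact resolvent, $Ae_k=\lambda_ke_k$, $\lambda_k\to\infty$, $X_1=D(A)$, $p=2$, so that $X_\gamma=D(A^{1/2})$. The functions $w_k(s):=e^{-|s|A}\lambda_k^{-1/2}e_k$ are uniformly bounded in $H^1_2((-\delta,\delta);X_0)\cap L_2((-\delta,\delta);X_1)$, since $\int_{-\delta}^{\delta}\|Aw_k(s)\|_{X_0}^2\,ds=1-e^{-2\delta\lambda_k}\le 1$ and $\|\dot w_k\|_{X_0}=\|Aw_k\|_{X_0}$ a.e., yet $\|w_j(0)-w_k(0)\|_{X_\gamma}\ge\|A^{1/2}(\lambda_j^{-1/2}e_j-\lambda_k^{-1/2}e_k)\|_{X_0}=\sqrt{2}$ for $j\neq k$, so no subsequence converges in $C([-\delta,\delta];X_\gamma)$. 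What Aubin--Lions--Simon does give is compactness into $C(J;B)$ for spaces $B$ \emph{strictly} weaker than the trace space, e.g.\ $B=X_{\gamma,\mu}$ with $\mu<1$; so your window argument only returns precompactness of $\{z(t_n)\}$ in $X_{\gamma,\mu}$, which you already had.

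The missing idea --- and the actual content of the cited proof --- is to let the equation, not an abstract embedding, produce the gain from $X_{\gamma,\mu}$ to $X_\gamma$. On the compact set $K:=\overline{\{z(t):t\ge\tau\}}^{\,X_{\gamma,\mu}}$ there is a uniform existence time $a(K)>0$, and for $0<\delta<a(K)$ the time-$\delta$ solution map $\Phi_\delta\colon K\to X_\gamma$ is well defined and continuous: by \propref{clementli}, solutions with data in $X_{\gamma,\mu}$ depend continuously on the data in the weighted maximal regularity class, and that class embeds into $C([\delta,a];X_\gamma)$ because away from $t=0$ the temporal weight is immaterial. Hence $\{z(t):t\ge\tau+\delta\}=\Phi_\delta(\{z(t):t\ge\tau\})\subset\Phi_\delta(K)$ is relatively compact in $X_\gamma$, while $\{z(t):\delta\le t\le\tau+\delta\}$ is compact as the continuous image of a compact interval (recall $z\in C((0,t^+(z_0));X_\gamma)$); the case $z_0\in X_\gamma$ is then handled as you do. In other words, the compactness in $X_\gamma$ is a consequence of the smoothing encoded in the weights $\mu<1$, which is precisely what the weighted framework was set up to deliver. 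Note also that your closing ``frozen coefficient'' concerns are already packaged inside the local theory of \propref{clementli} and are not where the difficulty of this proposition lies.
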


\section{Nematic liquid crystals as quasilinear evolution equations}
\label{abstractreformulation}

We now reformulate \eqref{eq:LCD} equivalently as a quasilinear parabolic evolution
equation for the unknown $z=(u,d)$. To this end, for $1<q<\infty$ define the
Banach spaces $X_0$ by
\begin{align*}
X_0:= L_{q,\sigma}(\Omega)\times L_q(\Omega)^n,
\end{align*}
where $\Omega\subset \R^n$ is a bounded domain with boundary $\partial\Omega\in C^2$.
The subscript $\sigma$ in $L_{q,\sigma}(\Omega)$ means as usual the subspace of $L_q(\Omega)^n$
consisting of solenoidal vector fields.

The Neumann-Laplacian $\cD_q$ in  $L_q(\Omega)$ is  defined by $\cD_q=-\Delta$ with domain
\begin{align*}
  D(\cD_q):= \{d\in H^2_q(\Omega)^n:\, \partial_\nu d =0 \mbox{ on } \partial\Omega\}.
\end{align*}
It is well-known that $\cD_q$ has the property of $L_p$-maximal regularity;
see~\cite[Theorem 8.2]{DHP03}.

Let $\IP: L_q(\Omega)^n \to L_{q,\sigma}(\Omega)$ denote the Helmholtz projection.
We then define the Stokes Operator $\cA_q=-\IP \Delta$ in $L_{q,\sigma}(\Omega)$ with domain
\begin{align*}
  D(\cA_q) = \{ u\in H^2_q(\Omega)^n:\, {\rm div}\, u=0 \mbox{ in } \Omega,\, u=0 \mbox{ on }
\partial\Omega\} .
\end{align*}
It is also well-known that $\cA_q$ has the property of $L_p$-maximal regularity;
see e.g. \cite{Sol77A,Gig85,ShortSolonnikov}.

Next, we define the space $X_1$ by
\begin{align*}X_1:= D(\cA_q)\times D(\cD_q),\end{align*}
  equipped with its canonical norms. Then $X_1\overset{d}{\hookrightarrow} X_0$ densely.

The quasilinear part $A(z)$ of \eqref{eq:qlproblem} is  given by the tri-diagonal matrix
\begin{align*} A(z)=\left[\begin{array}{cc} \cA_q & \IP \cB_q(d)\\
                                 0 & \cD_q\end{array}\right],\end{align*}
where the operator $\cB_q$ is given by
\begin{align*} [\cB_q(d) h]_i := \partial_i d_l \Delta h_l +\partial_k d_l \partial_k\partial_i h_l,
\end{align*}
for which we employed the sum convention. Note that
\begin{align*}
 \cB_q(d)d =  {\rm div}([\nabla d]^{\sf T}\nabla d).
\end{align*}

Obviously, $\cB(d):X_1\to X_0$  is bounded for each $d\in C^1(\overline{\Omega})^n$ and
the map $d\mapsto \IP\cB_q(d)$ is polynomial, hence real analytic.
By the tri-diagonal structure of $A(z)$ and by the regularity of $\cB$ one can
easily see that $A(z)$ also has the property of $L_p$-maximal regularity,
for each $z\in C^1(\overline{\Omega})^{2n}$. Indeed, for a fixed right-hand side
$(f_u,f_d) \in L_p(0,a;X_{\gamma,\mu})$ and initial values $(u_0,d_0) \in X_{\gamma,\mu}$,
we may use the maximal regularity of $\cD_q$ to obtain a solution $\tilde{d}$ of the heat equation with
Neumann boundary condition in the right maximal regularity class. By setting
\begin{align*}
  \widetilde{f_u} := f_u - \IP\cB_q(d)\tilde{d}                                                              \end{align*}
  as right-hand side for the Stokes equation, we obtain a solution $\tilde{u}$ in the
right maximal regularity class due to the fact that $\cB_q(d)$ is linear and bounded.

The right-hand side $F(z)$ of \eqref{eq:qlproblem} is defined by
\begin{align*}
F(z)=(-\IP u\cdot\nabla u, -u\cdot \nabla d+|\nabla d|^2d),
\end{align*}
which is also polynomial, hence a real analytic mapping from
$C^1(\overline{\Omega})^{2n}$ into $X_0$. 

Note that (A) and (F) hold, as soon as we have the embedding 
$$X_{\gamma,\mu}\hookrightarrow C^1(\overline{\Omega})^{2n}.$$
The space $X_\gamma$ is given by
\begin{align*}
X_\gamma:= (X_0,X_1)_{1-1/p,p}= D_{\cA_q}(1-1/p,p)\times
D_{\cD_q}(1-1/p,p);
\end{align*}
see \cite{Ama95, DHP07}. As explained in Section \ref{toolbox}, we consider $L_p$-spaces
with temporal weights. The trace space of the class
$$z\in H^1_{p,\mu}(J;X_0)\cap L_{p,\mu}(J;X_1)$$
now reads
$$
X_{\gamma,\mu}=(X_0,X_1)_{\mu-1/p,p}= D_{\cA_q}(\mu-1/p,p)\times D_{\cD_q}(\mu-1/p,p),
$$
provided  $p\in (1,\infty)$ and $\mu\in(1/p,1]$; see ~\cite[Theorem 4.12]{MS12}.

In order to obtain the embeddings $X_\gamma\hookrightarrow C^1(\overline{\Omega})^{2n}$ and
more generally $X_{\gamma,\mu}\hookrightarrow C^1(\overline{\Omega})^{2n}$ we impose on $p,q \in (1,\infty)$
now the conditions
\begin{equation}\label{pq}
\frac{2}{p} +\frac{n}{q}<1,\quad \frac{1}{2} + \frac{1}{p} +\frac{n}{2q}<\mu\leq 1.
\end{equation}
Standard Sobolev embedding theorems can then be applied.  

Further, we recall from \cite[Theorem 4.3.3]{Tri78} and \cite[Theorem 3.4]{Ama00}, respectively,
the following characterizations of the interpolation spaces involved,
$$
d\in D_{\cD_q}( \mu-1/p,p) \quad\Leftrightarrow \quad d\in B^{2\mu-2/p}_{qp}(\Omega)^n,\;
\partial_\nu d=0 \mbox{ on } \partial\Omega,
$$
and
$$
u \in D_{\cA_q}(\mu-1/p,p) \quad \Leftrightarrow \quad  u\in B^{2\mu-2/p}_{qp}(\Omega)^n
\cap L_{q,\sigma}(\Omega),\; u=0 \mbox{ on } \partial\Omega.
$$
Observe that both of these characterizations make sense,
since the condition \eqref{pq} guarantees the existence of the trace.

\section{Existence, uniqueness, and regularity of solutions}
\label{sec:local}

We start this section by applying Proposition \ref{clementli} to obtain the
following result on local well-posedness of \eqref{eq:LCD}.

\begin{theorem}\label{local}
Let $p,q,\mu$ be subject to \eqref{pq}, and assume $z_0 = (u_0,d_0)\in X_{\gamma,\mu}$, which means that $u_0,d_0\in B^{2\mu-2/p}_{qp}(\Omega)^n$ satisfy the compatibility conditions
$$ {\rm div}\, u_0= 0 \mbox{ in } \Omega,\quad u_0=\partial_\nu d_0=0 \mbox{ on } \partial\Omega.$$
Then for some $a=a(z_0)>0$, there is a unique solution
$$z\in H^1_{p,\mu}(J,X_0)\cap L_{p,\mu}(J;X_1),\quad J=[0,a],$$
of \eqref{eq:LCD} on $J$. Moreover,
$$z\in C([0,a];X_{\gamma,\mu})\cap C((0,a];X_\gamma),$$
i.e. the solution regularizes instantly in time. It depends continuously on $z_0$ and exists on a maximal time interval $J(z_0) = [0,t^+(z_0))$. Therefore problem
\eqref{eq:LCD}, i.e.\ \eqref{eq:qlproblem}, generates a local semi-flow in its natural
state space $X_{\gamma,\mu}$.
\end{theorem}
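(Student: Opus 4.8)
The plan is to deduce Theorem~\ref{local} directly from the abstract local existence result Proposition~\ref{clementli}, so that the entire task reduces to verifying that the operators $A$ and $F$ introduced in Section~\ref{abstractreformulation} satisfy hypotheses (A) and (F) in the present $L_p$-$L_q$ setting. The single analytic fact that drives everything is the pair of embeddings $X_\gamma\hookrightarrow C^1(\overline{\Omega})^{2n}$ and $X_{\gamma,\mu}\hookrightarrow C^1(\overline{\Omega})^{2n}$. First I would establish these: by the characterizations recorded at the end of Section~\ref{abstractreformulation}, the interpolation spaces $X_\gamma$ and $X_{\gamma,\mu}$ embed into $B^{2-2/p}_{qp}(\Omega)^{2n}$ and $B^{2\mu-2/p}_{qp}(\Omega)^{2n}$ respectively, and the two inequalities in \eqref{pq}, namely $\tfrac{2}{p}+\tfrac{n}{q}<1$ and $\tfrac12+\tfrac1p+\tfrac{n}{2q}<\mu$, are precisely the conditions $2-2/p-n/q>1$ and $2\mu-2/p-n/q>1$ that yield the Sobolev embeddings $B^{2-2/p}_{qp}(\Omega)\hookrightarrow C^1(\overline{\Omega})$ and $B^{2\mu-2/p}_{qp}(\Omega)\hookrightarrow C^1(\overline{\Omega})$.

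Next I would verify (F). The map $F(z)=(-\IP\, u\cdot\nabla u,\, -u\cdot\nabla d+|\nabla d|^2 d)$ is polynomial in the arguments $(u,\nabla u,d,\nabla d)$, and by the embedding above these arguments depend linearly and boundedly on $z\in X_{\gamma,\mu}$ with values in $C(\overline{\Omega})$. Since pointwise multiplication is a bounded bilinear map on $C(\overline{\Omega})$ and $\Omega$ is bounded, so that $C(\overline{\Omega})\hookrightarrow L_q(\Omega)$, each summand lands in $X_0=L_{q,\sigma}(\Omega)\times L_q(\Omega)^n$; being a finite composition of bounded multilinear maps, $F$ is real analytic, which is (F).

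For (A), I observe that $A(z)$ depends on $z$ only through $d$, and only through the assignment $d\mapsto\IP\cB_q(d)$, which is \emph{linear} in $d$; since $\cB_q(d)\in\mathcal{L}(X_1,X_0)$ is bounded with norm controlled by $\|\nabla d\|_\infty$ (again by the $C^1$-embedding), the map $A\colon X_{\gamma,\mu}\to\mathcal{L}(X_0,X_1)$ is affine, hence real analytic. The maximal $L_p$-regularity of $A(v)$ for each fixed $v$ follows from the upper-triangular structure exactly as sketched in Section~\ref{abstractreformulation}: the diagonal entries $\cA_q$ and $\cD_q$ enjoy maximal regularity, while $\IP\cB_q(d)$ is merely a bounded lower-order coupling, so one first solves the Neumann heat equation for $d$ and then inserts $-\IP\cB_q(d)\tilde d$ into the Stokes problem.

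With (A) and (F) established, Proposition~\ref{clementli} applies and delivers, for each admissible $z_0\in X_{\gamma,\mu}$, a time $a=a(z_0)>0$ and a unique solution in $H^1_{p,\mu}(J;X_0)\cap L_{p,\mu}(J;X_1)$ with the asserted embedding into $C([0,a];X_{\gamma,\mu})\cap C((0,a];X_\gamma)$, the continuous dependence on $z_0$, and the extension to a maximal interval $[0,t^+(z_0))$; the semi-flow property is then immediate from uniqueness together with continuous dependence. The one step demanding genuine care --- and hence the main obstacle --- is the numerology of \eqref{pq}: one must confirm that the weight-relaxed condition on $\mu$ still forces $2\mu-2/p-n/q>1$, since this single $C^1$-embedding is what simultaneously makes the quadratic nonlinearities $X_0$-valued and turns $\nabla d$ into an $L_\infty$-multiplier for $\cB_q$; once this is checked, the remainder is bookkeeping layered on top of the abstract theory of Section~\ref{toolbox}.
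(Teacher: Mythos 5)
Your proposal is correct and follows essentially the same route as the paper: the paper's proof of Theorem~\ref{local} is precisely the verification, carried out in Section~\ref{abstractreformulation}, that \eqref{pq} yields $X_{\gamma,\mu}\hookrightarrow C^1(\overline{\Omega})^{2n}$, that $F$ and $d\mapsto\IP\cB_q(d)$ are then real analytic into the appropriate spaces, and that the triangular structure of $A(z)$ (heat equation first, then Stokes with the bounded coupling $\IP\cB_q(d)$) gives maximal $L_p$-regularity, after which Proposition~\ref{clementli} is invoked. Your additional details --- the explicit numerology $2\mu-2/p-n/q>1$, the bound $\|\cB_q(d)h\|_{L_q}\leq C\|\nabla d\|_\infty\|h\|_{H^2_q}$, and the observation that $A$ is affine in $d$ --- are accurate fillings-in of steps the paper leaves implicit.
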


\begin{remark} 
Assuming that $2/p+n/q<1$,  for $\varepsilon > 0$ we may choose $\mu$ subject to~\eqref{pq} 
such that
\begin{align*}
 H_q^{1 + \frac{n}{q} + \varepsilon}(\Omega)^n \hookrightarrow B^{2\mu-2/p}_{qp}(\Omega)^n \hookrightarrow H_q^{1 + \frac{n}{q} - \varepsilon}(\Omega)^n
\end{align*}
due to Sobolev embeddings~\cite[Theorem 4.6.1]{Tri78}. 
Furthermore, we can choose $p,q$ large with 
\begin{align*}
C^{1+\varepsilon}(\Omega)^n \hookrightarrow B^{2\mu-2/p}_{qp}(\Omega)^n. 
\end{align*}

Employing different time weights for $u$ and $d$, an inspection of the above proofs shows that 
the assertion of the above theorem remains true provided 
$u_0\in C^\alpha(\Omega)$.
\end{remark}

The following result tells that the condition \eqref{deq1} is preserved by
\eqref{eq:LCD}.

\begin{proposition}\label{lengthdpreserved}
Suppose that $\mu, p, q$ are satisfying \eqref{pq} and let  $z_0 = (u_0, d_0)\in X_{\gamma,\mu}$
with $|d_0| \equiv 1$, $a>0$. Let
\begin{align*}
  z\in H^1_{p,\mu}(J;X_0)\cap L_{p,\mu}(J;X_1)
\end{align*}
be a solution of \eqref{eq:LCD} on the interval $J=[0,a]$. Then $|d(t)| \equiv 1$ holds
for all $t \in [0, a]$.
\end{proposition}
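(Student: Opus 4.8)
The plan is to derive a scalar evolution equation for the quantity $w := |d|^2 - 1$ and show that it solves a linear parabolic problem with zero initial data, forcing us to conclude $w \equiv 0$ by uniqueness. The starting point is the second equation of \eqref{eq:LCD}, namely $\partial_t d + (u\cdot\nabla)d = \Delta d + |\nabla d|^2 d$. First I would take the inner product of this equation with $d$ in $\R^n$. Using the identities $d\cdot\partial_t d = \tfrac12\partial_t|d|^2$, $d\cdot(u\cdot\nabla)d = \tfrac12(u\cdot\nabla)|d|^2$, and the pointwise relation $d\cdot\Delta d = \tfrac12\Delta|d|^2 - |\nabla d|^2$, the right-hand side reorganizes: the term $d\cdot(|\nabla d|^2 d) = |\nabla d|^2|d|^2$ combines with $-|\nabla d|^2$ coming from $d\cdot\Delta d$. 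Writing everything in terms of $w$ (noting $|d|^2 = w+1$ and $\partial_t|d|^2 = \partial_t w$, etc.), I expect the $|\nabla d|^2$ terms to assemble into $|\nabla d|^2 w$, yielding a clean equation of the schematic form
\begin{align*}
\partial_t w + (u\cdot\nabla)w = \Delta w + 2|\nabla d|^2 w \quad\text{in } (0,a)\times\Omega.
\end{align*}

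Next I would record the boundary and initial conditions for $w$. The Neumann condition $\partial_\nu d = 0$ on $\partial\Omega$ gives $\partial_\nu w = 2\,d\cdot\partial_\nu d = 0$ on $\partial\Omega$, so $w$ satisfies a homogeneous Neumann condition. The hypothesis $|d_0|\equiv 1$ gives $w(0) = |d_0|^2 - 1 \equiv 0$. Thus $w$ is a solution of a homogeneous linear parabolic initial-boundary value problem
\begin{align*}
\partial_t w + (u\cdot\nabla)w - \Delta w - c(t,x)\,w = 0,\quad \partial_\nu w|_{\partial\Omega}=0,\quad w(0)=0,
\end{align*}
with $c := 2|\nabla d|^2$ and drift $u$. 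The obvious candidate solution is $w\equiv 0$, and the whole argument reduces to a uniqueness statement for this linear problem.

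The main obstacle is justifying the manipulations and the uniqueness rigorously at the available regularity level. Since $z\in H^1_{p,\mu}(J;X_0)\cap L_{p,\mu}(J;X_1)$, by \thmref{local} and the embedding $X_{\gamma,\mu}\hookrightarrow C^1(\overline\Omega)^{2n}$ together with the instant regularization $z\in C((0,a];X_\gamma)$, the coefficients $u$ and $c=2|\nabla d|^2$ are controlled: in particular $d(t)\in H^2_q(\Omega)^n$ for a.e.\ $t$, $\nabla d$ is bounded, and $u$ is divergence-free with $u=0$ on $\partial\Omega$. I must check that $w$ itself lies in the correct maximal-regularity class so that it is a genuine solution of the linear problem and so that the maximal-$L_p$-regularity of the Neumann Laplacian $\cD_q$ (with the lower-order perturbations $u\cdot\nabla$ and $-c$ absorbed, which is legitimate since these are relatively bounded perturbations of $\cD_q$ on the relevant scale) yields uniqueness. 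Concretely, the linear operator $\cD_q + (u\cdot\nabla) - c$ has maximal $L_p$-regularity for the given coefficients, and the Cauchy problem with zero data has the unique solution $w\equiv 0$; hence $|d(t)|^2\equiv 1$ for all $t\in[0,a]$. The care needed is entirely in verifying that $w=|d|^2-1$ has enough regularity to be tested against this uniqueness result — this is where the conditions \eqref{pq} guaranteeing $X_{\gamma,\mu}\hookrightarrow C^1(\overline\Omega)^{2n}$ are used, since they make the products $d\cdot\Delta d$ and $|\nabla d|^2 d$ well-defined elements of $L_q$ and put $w$ in the right trace space.
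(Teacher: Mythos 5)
Your proposal is correct and follows essentially the same route as the paper's own proof: setting $\varphi=|d|^2-1$, deriving the linear parabolic problem $\partial_t\varphi+u\cdot\nabla\varphi=\Delta\varphi+2|\nabla d|^2\varphi$ with homogeneous Neumann boundary condition and zero initial data, and concluding $\varphi\equiv 0$ by uniqueness. Your additional discussion of the regularity of the coefficients and of $w$ only fleshes out details the paper leaves implicit; it does not change the argument.
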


\begin{proof}
  Setting $\varphi= |d|^2-1$  the elementary identities,
\begin{align*}
 \partial_t |d|^2 =   2d \cdot \partial_td, \quad \Delta|d|^2 = 2 \Delta d\cdot d + 2|\nabla d|^2, \quad \nabla|d|^2 = 2d\cdot\nabla d, \end{align*}
and multiplication with $d$ of the second line in \eqref{eq:LCD} yields the problem
\begin{align*}
  \left\{
  \begin{array}{rllll}
 \partial_t\varphi  + u\cdot\nabla\varphi &\!=\!& \Delta \varphi + 2|\nabla{d}|^2\varphi  && \text{in } \Omega\\
 \partial_\nu \varphi&\!=\!&0  &&\text{on } \partial\Omega,\\
 \varphi(0)&\!=\!&0 &&\text{in } \Omega,
 \end{array}\right.
\end{align*}
provided $|d_0|\equiv1$. Uniqueness of this parabolic convection-reaction diffusion
equations  yields $\varphi\equiv0$, i.e.\ $|d|\equiv1$.
\end{proof}

As the nonlinearities $A$ and $F$ are real analytic we may employ Angenent's method (Proposition~\ref{qlregularity}) to obtain further regularity of the solutions of \eqref{eq:LCD}.

\begin{proposition}\label{regularity}
Suppose that $\mu, p, q$ satisfy~\eqref{pq}, $z_0\in X_{\gamma,\mu}$, and $a>0$ and let
\begin{align*}
  z\in H^1_{p,\mu}(J;X_0)\cap L_{p,\mu}(J;X_1)
\end{align*}
be a solution of \eqref{eq:LCD} on the interval $J=[0,a]$.
Then for each $k\in\N$,
\begin{align*}t^k [\frac{d}{dt}]^k z \in  H^1_{p,\mu}(J;X_0)\cap L_{p,\mu}(J;X_1).\end{align*}
Moreover, $z\in C^\omega((0,a); X_1)$.
\end{proposition}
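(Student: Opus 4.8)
The plan is to invoke Proposition~\ref{qlregularity} essentially verbatim, since the abstract machinery has already been assembled for exactly this purpose. The key point is that the hypotheses (A) and (F) of the quasilinear framework are fulfilled in the present concrete setting. By the discussion in Section~\ref{abstractreformulation}, the maps $d\mapsto\IP\cB_q(d)$ and $F$ are polynomial, hence real analytic, as maps on $C^1(\overline{\Omega})^{2n}$; the tri-diagonal operator $A(z)$ enjoys $L_p$-maximal regularity for each admissible argument; and under condition \eqref{pq} we have the crucial embedding $X_{\gamma,\mu}\hookrightarrow C^1(\overline{\Omega})^{2n}$. Together these verify that $A\in C^\omega(X_{\gamma,\mu};\mathcal{L}(X_0,X_1))$ with the maximal regularity property, and $F\in C^\omega(X_{\gamma,\mu};X_0)$, which are precisely (A) and (F).

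First I would record that the given solution $z\in H^1_{p,\mu}(J;X_0)\cap L_{p,\mu}(J;X_1)$ lies in the solution class to which Proposition~\ref{qlregularity} applies, with the parameters $p,\mu,a$ inherited directly from the statement. Since (A) and (F) hold, Proposition~\ref{qlregularity} applies literally and yields the first conclusion: for every $k\in\N$,
\begin{align*}
t^k\Big[\tfrac{d}{dt}\Big]^k z \in H^1_{p,\mu}(J;X_0)\cap L_{p,\mu}(J;X_1).
\end{align*}
The temporal analyticity $z\in C^\omega((0,a);X_1)$ is the second assertion of Proposition~\ref{qlregularity} and follows at once. Thus the entire statement reduces to checking that the present nonlinear structure satisfies the abstract hypotheses, and no new estimates are required.

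The only genuine point demanding care is the verification of (A) and (F), and in particular the real analyticity of $A$ and $F$ as maps on $X_{\gamma,\mu}$ rather than merely on $C^1(\overline{\Omega})^{2n}$. This is where the embedding \eqref{pq} does the decisive work: composing the polynomial (hence entire) maps $d\mapsto\IP\cB_q(d)$ and $z\mapsto F(z)$, which are real analytic on $C^1(\overline{\Omega})^{2n}$, with the bounded linear embedding $X_{\gamma,\mu}\hookrightarrow C^1(\overline{\Omega})^{2n}$ preserves real analyticity into the target spaces $\mathcal{L}(X_0,X_1)$ and $X_0$ respectively. I would therefore spell out that the relevant nonlinearities are polynomial in $d$ and $\nabla d$ (and in $u,\nabla u$ for $F$), and that the embedding controls exactly the $C^1$-norm these polynomials depend on, so that composition is well-defined and analytic.

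The step I expect to be the main obstacle—though in truth it is minor here—is the tacit transfer of Proposition~\ref{qlregularity}, which is stated for the abstract problem \eqref{eq:qlproblem}, to the reformulated system \eqref{eq:LCD}. This rests on the equivalence between \eqref{eq:LCD} and \eqref{eq:qlproblem} established in Section~\ref{abstractreformulation}, where the quasilinear operator $A(z)$ absorbing ${\rm div}([\nabla d]^{\sf T}\nabla d)$ via $\cB_q$ was introduced precisely so that the abstract theory becomes applicable. Once that identification is acknowledged, the proposition is a direct citation and the proof is essentially a one-line appeal to the machinery, exactly parallel to how Proposition~\ref{clementli} was used to prove Theorem~\ref{local}.
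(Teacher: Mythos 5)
Your proposal matches the paper's own argument: the paper proves Proposition~\ref{regularity} precisely by noting that (A) and (F) hold in the concrete setting of Section~\ref{abstractreformulation} (polynomial, hence real analytic, nonlinearities together with the embedding $X_{\gamma,\mu}\hookrightarrow C^1(\overline{\Omega})^{2n}$ under \eqref{pq} and the $L_p$-maximal regularity of $A(z)$) and then citing Proposition~\ref{qlregularity} directly. Your additional care about analyticity transferring through the embedding and about the equivalence of \eqref{eq:LCD} with \eqref{eq:qlproblem} is exactly the tacit content of the paper's one-line proof.
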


We will employ Proposition \ref{regularity} in the following to justify the regularity of
time derivatives of the energy functional.

\begin{remark}
  \label{rkm:re}
Employing scaling techniques jointly in time and space, it is possible to show via
maximal regularity and the implicit function theorem that $u,\pi,d$ are
real analytic in $(0,t^+(z_0))\times \Omega$; see \cite[Section 5]{Pru03} for parabolic problems, and specifically for a Navier-Stokes problem \cite{PS11}.
As we will not use this result below we omit the details, here.
\end{remark}

\section{Stability and Convergence to Equilibria} \label{stabilityofequilibria}

We consider the set $\cE_0=\{0\}\times \R^n$, which are obviously equilibria of~\eqref{eq:LCD}. This set forms a $n$-dimensional subspace of $X_1$, hence a $C^1$-manifold with tangent 
space $\{0\}\times \R^n$ at each point $(0,d_*)\in \cE_0$ .
The linearization of \eqref{eq:LCD} at $z_*\in\cE_0$ is given by the linear evolution equation
$$
\dot{z} + A_*z = f,\quad  z(0)=z_0,
$$
in $X_0$, where
$$
A_* = {\rm diag}(\cA_q, \cD_q),\quad D(A_*)=X_1.
$$
As $\Omega$ is bounded, the spectrum $\sigma(\cA_q)$ consists only of positive eigenvalues
and $0\not\in \sigma(\cA_q)$. On the other hand, $\cD_q$ has $0$ as an eigenvalue, which is semi-simple and the remaining part of $\sigma(\cD_q)$ consist only of positive eigenvalues.
Thus $\sigma(A_*)\setminus\{0\} \subset [\delta,\infty)$ for some $\delta>0$ and
the kernel of $A_*$ is given by
$$
N(A_*)=\{0\}\times \R^n,
$$
which equals the tangent space. In Remark 2.2 in \cite{psz09} it is shown that all equilibria close to $z_*$ are contained in a manifold $\mathcal{M}$ of dimension $n = \operatorname{dim}(N(A_*))$. Since the dimension of $\mathcal{E}_0$ is also $n$, there exists an open set $V\subset X_1$ with $\mathcal{M} \cap V =  \mathcal{E} \cap V = \cE_0 \cap V$; i.e. $\mathcal{E} \cap V$ contains no other equilibrium. As a result we see that the equilibrium is normally stable.


Now we are in position to apply Proposition \ref{thm:global1} to conclude the following
stability result for the equilibria of   \eqref{eq:LCD}.

\begin{theorem}\label{stability}
Let $p, q$ satisfy the first inequality in~\eqref{pq}. Then each equilibrium
$z_*\in \{0\}\times \R^n$ is stable in $X_\gamma$, i.e. there exists $\epsilon>0$ such that a
solution $z(t)$ of~\eqref{eq:LCD} with initial value $z_0\in X_\gamma$, $|z_0-z_*|_{X_\gamma}\leq \epsilon$, exists globally and converges exponentially to some
$z_\infty\in \{0\}\times \R^n$ in $X_\gamma$, as $t\to\infty$.
\end{theorem}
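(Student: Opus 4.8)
The plan is to deduce the result directly from the generalized principle of linearized stability, Proposition~\ref{thm:global1}, applied with the weight $\mu=1$, so that $X_{\gamma,\mu}=X_\gamma$. The first step is to confirm that the standing assumptions (A) and (F) hold under the sole hypothesis of the theorem. For $\mu=1$ the second inequality in \eqref{pq} reads $\tfrac12+\tfrac1p+\tfrac{n}{2q}<1$, which is equivalent to the first inequality $\tfrac2p+\tfrac{n}{q}<1$; hence the single assumption $\tfrac2p+\tfrac{n}{q}<1$ already yields the embedding $X_\gamma\hookrightarrow C^1(\overline{\Omega})^{2n}$ and, with it, the real analyticity of $A$ and $F$ demanded by (A) and (F). This is precisely why no explicit condition on a weight enters the statement.

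The second step is to certify that each $z_*=(0,d_*)\in\{0\}\times\R^n$ is normally stable, i.e.\ to verify (i)--(iv) of Proposition~\ref{thm:global1}; much of this has already been assembled in the discussion preceding the theorem. The key observation is that the full quasilinear linearization $A_0w=A(z_*)w+(A'(z_*)w)z_*-F'(z_*)w$ collapses to the decoupled operator $A_*=\operatorname{diag}(\cA_q,\cD_q)$: since $u_*=0$ and $d_*$ is constant one has $\nabla d_*=0$, so that $\cB_q(d_*)=0$ and every first-order term in $A'$ and $F'$ at $z_*$ (each carrying a factor of $u_*$ or of a derivative of $d_*$) vanishes. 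The set $\mathcal{E}_0=\{0\}\times\R^n$ is a linear $n$-dimensional subspace of $X_1$, hence a $C^1$-manifold whose tangent space at $z_*$ is $\{0\}\times\R^n=N(A_*)$, giving (i) and (ii). Because $\cA_q$ is boundedly invertible and $0$ is a semi-simple eigenvalue of the Neumann Laplacian $\cD_q$ with kernel the constant fields, we obtain $N(A_*)\oplus R(A_*)=X_0$, which is (iii), and $\sigma(A_*)\setminus\{0\}\subset[\delta,\infty)\subset\mathbb{C}_+$ for some $\delta>0$, which is (iv).

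With normal stability in hand, the final step is a direct appeal to Proposition~\ref{thm:global1}: it furnishes $\epsilon>0$ such that every initial value with $|z_0-z_*|_{X_\gamma}\le\epsilon$ produces a global solution converging at an exponential rate in $X_\gamma$ to some equilibrium $z_\infty$. Since, by the dimension count recalled above, the equilibria in a neighborhood of $z_*$ coincide with those of $\mathcal{E}_0$, the limit satisfies $z_\infty\in\{0\}\times\R^n$, which completes the argument.

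I expect the genuine obstacle to be not any estimate but the two bookkeeping points underlying the second step. First, one must justify rigorously that $A_0$ reduces to $A_*$, which requires tracking the Fr\'echet derivatives of the polynomial maps $d\mapsto\IP\cB_q(d)$ and $F$ and using $u_*=0$, $\nabla d_*=0$ to see that all spurious terms drop. Second, condition (i) is a statement about the \emph{true} equilibrium set $\mathcal{E}$, not merely about $\mathcal{E}_0$, so one must invoke the abstract result (Remark~2.2 in \cite{psz09}) that near $z_*$ the equilibria form a manifold of dimension exactly $\dim N(A_*)$, and then match this with $\dim\mathcal{E}_0=n$ to exclude additional equilibria.
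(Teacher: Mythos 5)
Your proposal is correct and follows essentially the same route as the paper: you verify normal stability of $z_*\in\{0\}\times\R^n$ by showing the linearization collapses to $A_*=\operatorname{diag}(\cA_q,\cD_q)$, use the spectral properties of the Stokes operator and Neumann Laplacian on a bounded domain together with Remark~2.2 of \cite{psz09} to rule out nearby equilibria outside $\cE_0$, and then invoke Proposition~\ref{thm:global1}. Your additional observation that for $\mu=1$ the second inequality in \eqref{pq} is equivalent to the first (so that assumptions (A) and (F) hold under the stated hypothesis alone) is a point the paper leaves implicit, and your explicit computation that the terms $(A'(z_*)w)z_*$ and $F'(z_*)w$ vanish at $z_*$ makes precise what the paper merely asserts.
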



We next consider the energy of the system given by
\begin{equation}\label{energy}
\sE = \frac{1}{2}\int_\Omega[|u|^2+|\nabla d|^2]dx =\sE_{kin}+\sE_{pot}.
\end{equation}
Let $z = H^1_{p,\mu}(J; X_0) \cap L_{p,\mu}(J; X_1)$
be a solution of~\eqref{eq:LCD}--\eqref{deq1} on $J = [0,a]$; according to Proposition 4.4 it belongs to $C^1((0,a);X_1)$. 
Using sum convention we have by  an integration by parts
\begin{align*}
\frac{d}{dt}\sE_{kin}(t) &=\int_\Omega \partial_t u \cdot u dx \\
&= \int_\Omega [-(u\cdot\nabla) u -\nabla\pi+ \Delta u- {\rm div}([\nabla d]^{\sf T}\nabla d])\cdot u dx\\
&= -\int_\Omega|\nabla u|^2dx + \int_\Omega \partial_k d_j\partial_id_j \partial_k u_i dx,\\
\end{align*}
as ${\rm div}\, u =0$ in $\Omega$ and $u=0$ on $\partial\Omega$. On the other hand,
we have by another integration by parts
\begin{align*}
\int_\Omega |\Delta d + |\nabla d|^2d|^2dx&= \int_\Omega[\Delta d +
|\nabla d|^2d][\partial_t d +(u\cdot\nabla) d]dx\\
&= -\int_\Omega [\partial_t \nabla d:\nabla d - |\nabla d|^2\partial_t|d|^2/2]dx\\
& \quad + \int_\Omega[ (u\cdot \nabla) d\cdot \Delta d + |\nabla d|^2 (u\cdot\nabla)|d|^2/2]dx\\
&= -\frac{d}{dt}\sE_{pot}(t) - \int_\Omega \partial_k(u_i\partial_id_j)\partial_k d_j dx\\
&=  -\frac{d}{dt} \sE_{pot}(t) - \int_\Omega \partial_ku_i\partial_id_j\partial_k d_j dx,
\end{align*}
by $|d|\equiv 1$ and 
the Neumann boundary condition for $d$.
Combining these equations, we obtain the energy identity
\begin{equation}\label{enid}
\frac{d}{dt} \sE(t) = -\int_\Omega [|\nabla u|^2 +|\Delta d+|\nabla d|^2d|^2]dx.
\end{equation}
Therefore $\sE(t)$ is non-increasing along solutions. But $\sE$ is also a strict Ljapunov functional, i.e.\ strictly decreasing along non-constant solutions. In fact, if $d\sE(t)/dt=0$ at some time instant, then by the energy equality we have $\nabla u=0$ and $\Delta d +|\nabla d|^2 d=0$ in $\Omega$. Therefore $u=0$ by the no-slip condition on $\partial\Omega$, and $d$ satisfies the nonlinear eigenvalue problem
\begin{align}
  \left\{
  \label{nlevp}
  \begin{array}{rllll}
    \Delta d +|\nabla d|^2 d &\!=\!&0 &&\text{in } \Omega,\\
    |d|^2&\!=\!&1 &&\text{in } \Omega,\\
    \partial_\nu d &\!=\!&0 &&\text{on } \partial\Omega.
  \end{array}\right.
\end{align}
But, as the lemma below shows, this implies $\nabla d=0$ in $\Omega$, hence $d=d_*$ is constant and $z_*:=(0,d_*)$, $|d_*|=1$ is an equilibrium of the problem.

\begin{lemma} \label{lma:dconst} Suppose that $d\in H^2_2(\Omega;\R^n)$ satisfies \eqref{nlevp}.
Then $d$ is constant in $\Omega$.
\end{lemma}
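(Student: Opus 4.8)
The goal is to show $\nabla d\equiv 0$, equivalently $\int_\Omega|\nabla d|^2\,dx=0$. The first thing I would record is that the naive energy test is useless here: testing the first equation of \eqref{nlevp} with $d$ and integrating by parts (using $\partial_\nu d=0$) gives $-\int_\Omega|\nabla d|^2 + \int_\Omega|\nabla d|^2|d|^2 = 0$, which is a tautology because $|d|\equiv 1$. The same degeneracy annihilates every test function proportional to $d$ or to $Md$ with $M$ a constant matrix, since $d$ is the normal direction of the constraint manifold $S^{n-1}$ and \eqref{nlevp} is precisely its tangential Euler--Lagrange equation. Hence only genuinely second order information can succeed, and the natural engine is a Bochner (Weitzenb\"ock) identity.

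The core computation I would carry out uses that differentiating $|d|^2\equiv1$ yields $d\cdot\partial_k d=0$ and $d\cdot\Delta d=-|\nabla d|^2$, so that \eqref{nlevp} reads $\Delta d=-|\nabla d|^2 d$ and $|\Delta d|^2=|\nabla d|^4$ pointwise. A direct calculation then gives
\begin{equation*}
\tfrac12\Delta|\nabla d|^2=|D^2 d|^2-|\nabla d|^4 .
\end{equation*}
Integrating over $\Omega$ and invoking the Reilly formula for the Neumann data $\partial_\nu d=0$, the interior terms combine and one is left with a boundary integral of the second fundamental form of $\partial\Omega$ applied to the tangential gradient of $d$; together with the pointwise bound $|D^2 d|^2\ge\tfrac1n|\Delta d|^2=\tfrac1n|\nabla d|^4$ this is the structure one would try to exploit. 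In the planar case $n=2$ I would instead lift $d=(\cos\theta,\sin\theta)$, whereupon \eqref{nlevp} collapses to $\Delta\theta=0$ with $\partial_\nu\theta=0$, forcing $\theta$, and hence $d$, to be constant.

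The step I expect to be the genuine obstacle is exactly the passage from the integrated Bochner identity to constancy, and here I would be cautious, because the stated hypotheses are \emph{not} by themselves sufficient. Indeed the equivariant map $d(x)=x/|x|$ satisfies $\Delta d+|\nabla d|^2 d=0$, $|d|\equiv1$ and $\partial_\nu d=0$ on every sphere centred at the origin, and it is smooth, hence in $H^2_2$, on any annulus $\{0<r_1<|x|<r_2\}\subset\R^n$; it is manifestly non-constant. Thus the conclusion fails on non-convex (or non-simply-connected) domains, and no purely local Bochner estimate can close without an additional geometric or topological ingredient: signing the boundary curvature term requires convexity of $\Omega$, while the planar lifting requires $\theta$ to be single valued. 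Consequently my plan would be to prove the lemma under a supplementary hypothesis that excludes the hedgehog, most naturally that $\Omega$ is convex or simply connected, or alternatively by using that the $d$ produced along the flow is homotopically trivial and of small energy; absent such an ingredient the statement as written does not hold.
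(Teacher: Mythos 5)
Your negative conclusion is the correct one, and the fault lies with the statement, not with your attempt: the hedgehog on an annulus is a genuine counterexample, so the lemma as stated (for an arbitrary bounded $C^2$ domain) is false. Indeed, for $\Omega=\{r_1<|x|<r_2\}$ and $d(x)=x/|x|$ one has $d\in C^\infty(\overline{\Omega})^n$, $|d|\equiv 1$, $|\nabla d|^2=(n-1)/|x|^2$ and $\Delta d=-(n-1)x/|x|^3=-|\nabla d|^2d$, while $\partial_\nu d=0$ on both boundary spheres because $d$ is homogeneous of degree zero and $\nu$ is radial; thus all of \eqref{nlevp} holds and $d$ is non-constant. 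Moreover, this example defeats the paper's own proof at exactly the point you flagged as the obstacle for your planar argument. The paper argues by inductively reducing the target dimension: it writes $d_1=c_1\cos\theta$, $d_2=c_1\sin\theta$, derives $c_1\Delta\theta+2\nabla c_1\cdot\nabla\theta=0$, multiplies by the test function $c_1\theta$ and integrates by parts to conclude $\int_\Omega c_1^2|\nabla\theta|^2dx=0$. This presupposes a globally defined, single-valued $\theta$ on $\Omega$. For the hedgehog on a planar annulus, $c_1\equiv 1$ and $\nabla\theta$ is the gradient of the polar angle --- a well-defined closed vector field with nontrivial period --- but no single-valued $\theta$ exists; the integration by parts is illegitimate, and its conclusion is plainly false there since $\int_\Omega|x|^{-2}dx>0$. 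So the missing hypothesis is topological (simple connectedness), exactly as you diagnosed; and even granting that, the paper's induction needs extra care on the set where $c_1$ vanishes, since $\theta$ is then not even locally defined.

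Two refinements to your analysis, and the downstream consequences. First, at the stated $H^2_2$ regularity simple connectedness (or convexity) alone still does not suffice in high dimensions: for $n\geq 5$ the same map $x/|x|$ on the unit ball lies in $H^2_2(B^n;\R^n)$ (its Hessian is of order $|x|^{-2}$, square-integrable precisely when $n>4$) and satisfies \eqref{nlevp} a.e.\ with classical Neumann data, although $B^n$ is convex and simply connected. A corrected lemma therefore needs a topological hypothesis \emph{and} continuity of $d$ on $\overline{\Omega}$; the latter costs nothing in this paper's application, where the lemma is applied to directors in $X_1\hookrightarrow C^1(\overline{\Omega})^n$ under \eqref{pq}. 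Second, the gap propagates: on an annulus, $(u,d)=(0,x/|x|)$ with pressure $\pi=\mathrm{const}-\frac12|\nabla d|^2$ is a genuine non-constant equilibrium of \eqref{eq:LCD}--\eqref{deq1}, because ${\rm div}([\nabla d]^{\sf T}\nabla d)=\frac12\nabla|\nabla d|^2+[\nabla d]^{\sf T}\Delta d=\frac12\nabla|\nabla d|^2$, using $[\nabla d]^{\sf T}d=\frac12\nabla|d|^2=0$. Hence the characterization of $\cE$ in the proposition following the lemma, and the proof of \thmref{globalconv} (which uses $\omega(z_0)\subset\cE$ together with \thmref{stability}), also require $\Omega$ to be simply connected. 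As for your positive program: your planar lifting is precisely the two-dimensional case of the paper's intended induction, so on simply connected domains the two arguments coincide there; your Bochner--Reilly route is genuinely different but, as you yourself anticipate, the interior terms ($|D^2d|^2\geq\frac1n|\nabla d|^4$ versus $-|\nabla d|^4$) do not close even with the favorable sign of the boundary term on convex domains, so it would need an additional smallness or dimensional ingredient. The cheapest honest repair is: assume $\Omega$ simply connected and $d\in C(\overline{\Omega})$, and justify the liftings.
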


\begin{proof}
The idea is to reduce inductively the dimension $N=n$ of the vector $d$. This can be achieved by introducing polar coordinates according to
$$ d_1=c_1\cos\theta,\; d_2=c_1\sin\theta,\; d_j = c_{j-1},\quad j\geq3.$$
Simple computations yield
$$ 1=|d|^2 = |c|^2, \quad |\nabla d|^2 = |\nabla c|^2 + c_1^2|\nabla\theta|^2, $$
and
$$\Delta c_j +[|\nabla c|^2+c_1^2|\nabla\theta|^2]c_j=0 \quad \mbox{ in } \Omega,$$
as well as $\partial_\nu c_j=0$ on $\partial\Omega$ for $j=2,\ldots,n-1$.
Moreover, by an easy calculations we further obtain
$$-\Delta c_1 +c_1|\nabla \theta|^2= [|\nabla c|^2+c_1^2|\nabla\theta|^2]c_1\quad \mbox{ in } \Omega,$$
and
$$c_1\Delta \theta + 2\nabla c_1\cdot\nabla\theta=0\quad \mbox{ in } \Omega,$$
as well as
$$ \partial_\nu c_1= c_1\partial_\nu\theta=0\quad \mbox{ on } \partial\Omega.$$
Multiplying the former equation by $c_1\theta$ and integrating over $\Omega$ we deduce
\begin{align*}
0&=\int_\Omega [c_1\Delta\theta + 2 \nabla c_1\cdot\nabla \theta)]c_1\theta dx
= \int_\Omega {\rm div}[c_1^2\nabla\theta]\theta dx = -\int_\Omega c_1^2 |\nabla\theta|^2dx,
\end{align*}
hence $c_1\nabla\theta=0$. This implies that $c$ satisfies the problem \eqref{nlevp} where the vector $c$ has dimension $N-1$. Inductively, we arrive at dimension $N=1$ and if $d$ is a solution of \eqref{nlevp} with dimension 1, then $d=1$ or $d=-1$ by connectedness of $\Omega$.
\end{proof}

Note that the side condition $|d|\equiv 1$ is important at this point.
Summarizing we proved the following result.

\begin{proposition}
The energy functional $\sE$ defined on $X_\gamma$ is a strict Ljapunov function for system \eqref{eq:LCD}--\eqref{deq1}. The equilibria of this system are given by the set
$$\cE=\{ z_*=(u_*,d_*): \, u_*=0,\, d_*\in \R^n,\, |d_*|=1\},$$
which forms a manifold of dimension $n-1$.
The corresponding pressures $p_*$ are constant as well.
\end{proposition}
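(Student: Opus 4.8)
The statement collects three assertions, and the plan is to derive all of them from two ingredients that the preceding discussion has already prepared: the energy identity \eqref{enid} and the rigidity result \lemref{lma:dconst}. The regularity afforded by \propref{regularity}, namely $z\in C^1((0,a);X_1)$, is what makes the time derivative of $\sE$ and the integrations by parts in \eqref{enid} legitimate, so I would invoke it at the outset.

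To show that $\sE$ is a strict Ljapunov functional, I would read off from \eqref{enid} that $\tfrac{d}{dt}\sE(t)\le 0$, so $\sE$ is non-increasing along every solution obeying \eqref{deq1}. For strictness, suppose $\tfrac{d}{dt}\sE$ vanishes on a time interval. Since the right-hand side of \eqref{enid} is the sum of two non-negative terms, both must vanish there, giving $\nabla u=0$ and $\Delta d+|\nabla d|^2 d=0$ in $\Omega$. The no-slip condition then forces $u=0$, while $d$ solves the nonlinear eigenvalue problem \eqref{nlevp}, the constraint $|d|^2=1$ being supplied by \eqref{deq1}. Invoking \lemref{lma:dconst} yields that $d$ is constant, so $z=(0,d_*)$ is an equilibrium on that interval. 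Consequently $\sE$ can fail to decrease strictly only along equilibria, which is exactly the strict Ljapunov property.

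For the equilibrium set I would run the same argument in reverse: a stationary solution has $\sE$ constant in time, hence $\tfrac{d}{dt}\sE\equiv 0$, and the computation above forces $u_*=0$ and $d_*$ constant with $|d_*|=1$. The opposite inclusion is immediate, since any pair $(0,d_*)$ with constant unit $d_*$ solves \eqref{eq:LCD}--\eqref{deq1}. Thus $\cE=\{0\}\times\{d_*\in\R^n:|d_*|=1\}$, i.e. $\{0\}$ times the unit sphere $S^{n-1}$, which is a smooth $(n-1)$-dimensional manifold. Finally, to identify the pressure I would substitute $u_*=0$ and a constant $d_*$ into the momentum equation of \eqref{eq:LCD}: the convection and viscous terms vanish with $u_*$, and ${\rm div}([\nabla d_*]^{\sf T}\nabla d_*)=0$ because $\nabla d_*=0$, leaving $\nabla p_*=0$; since $\Omega$ is connected, $p_*$ is constant.

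Since the analytic heavy lifting is already packaged in \eqref{enid} and \lemref{lma:dconst}, the proposition itself is mostly assembly, and the only genuinely substantive point is the rigidity lemma forcing $d$ to be constant. The one feature worth flagging is the role of the constraint \eqref{deq1}: without $|d|=1$ the stationary equation $\Delta d+|\nabla d|^2 d=0$ is solved by every constant vector, so the equilibria of \eqref{eq:LCD} alone fill the $n$-dimensional space $\{0\}\times\R^n$; it is precisely the unit-length condition that cuts this down to the $(n-1)$-dimensional sphere recorded in the statement.
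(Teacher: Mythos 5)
Your proposal is correct and follows essentially the same route as the paper: the energy identity \eqref{enid} gives monotonicity, vanishing of the dissipation forces $u=0$ via no-slip and reduces $d$ to the nonlinear eigenvalue problem \eqref{nlevp}, and \lemref{lma:dconst} then yields constancy of $d$, identifying $\cE$ as $\{0\}\times S^{n-1}$ with constant pressure. Even your closing remark on the indispensability of the constraint \eqref{deq1} mirrors the paper's own comment that without it the equilibria would fill the $n$-dimensional set $\{0\}\times\R^n$.
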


Suppose finally that  $z$ is a solution of \eqref{eq:LCD}--\eqref{deq1} which is
eventually bounded in $X_{\gamma}$ on its maximal interval of existence. Then,  by
Proposition~\ref{thm:global1} this solution is global and $z([\delta,\infty))\subset X_\gamma$
is relatively compact. Therefore its limit set
\begin{align*}
  \omega(z_0) =  \{ v \in X_\gamma\colon \exists t_n \uparrow \infty \text{ s.t. } z(t_n; z_0) \to v \text{ in } X_\gamma\}
\end{align*}
is nonempty. As $\sE$ is a strict Ljapunov functional for~\eqref{eq:LCD}--\eqref{deq1}, we obtain $\operatorname{dist}(z(t, z_0), \omega(z_0)) \to 0$ in $X_\gamma$ for $t\to \infty$ and $\omega(z_0)\subset \cE\subset X_1$. Now Theorem~\ref{stability} applies and we may conclude that $z(t)\to z_\infty\in\cE$ in $X_\gamma$ as $t\to\infty$. In summary we proved the following result. 

\begin{theorem}\label{globalconv}
Let $\mu, p, q$ satisfy~\eqref{pq}. Let $z_0 = (u_0, d_0)\in X_{\gamma,\mu}$ with $|d_0|\equiv 1$ and suppose that the solution $z(t)$ of \eqref{eq:LCD} is eventually bounded in $X_{\gamma}$ on its maximal interval of existence, i.e.
  \begin{align*}
    z \in BC([\tau, t^+(z_0)); X_{\gamma})
  \end{align*}
  for some $\tau \in (0,t^+(z_0))$. Then $z(t)$ exists globally and converges to an
equilibrium $z_\infty\in \cE$ in $X_\gamma$, as $t\to\infty$. The converse is also true.
\end{theorem}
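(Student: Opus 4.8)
The plan is to read global existence and compactness off the abstract theory, to localize the $\omega$-limit set inside the equilibria by a LaSalle-type argument built on the strict Ljapunov functional $\sE$, and then to pass to a single limit via the normal stability already established for Theorem~\ref{stability}. First I would verify the hypotheses of Proposition~\ref{thm:global2}: assumptions (A) and (F) are in force under \eqref{pq} by the reformulation of Section~\ref{abstractreformulation}, and the compactness hypothesis holds because $\Omega$ is bounded, the interpolation spaces defining $X_{\gamma,\mu}$ and $X_\gamma$ being of Besov type so that the required embedding is compact by Rellich--Kondrachov. Since $z$ is assumed eventually bounded in $X_\gamma$, Proposition~\ref{thm:global2} yields at once that $t^+(z_0)=\infty$ and that $\{z(t)\}_{t\ge\delta}$ is relatively compact in $X_\gamma$ for every $\delta>0$. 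Hence the limit set $\omega(z_0)$ is nonempty, compact and connected, and $\operatorname{dist}(z(t),\omega(z_0))\to0$ in $X_\gamma$ as $t\to\infty$.

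Next comes the LaSalle step. As $|d_0|\equiv1$, Proposition~\ref{lengthdpreserved} guarantees $|d(t)|\equiv1$ for all $t$, so the energy identity \eqref{enid} holds and $\sE$ is a strict Ljapunov functional, as recorded above. Since $\sE(z(t))$ is nonincreasing and bounded below by $0$ it converges, and by continuity of $\sE$ on $X_\gamma$ it is constant, say equal to $\sE_\infty$, on $\omega(z_0)$. Invoking invariance of the limit set, any $v\in\omega(z_0)$ generates a solution lying in $\omega(z_0)$, along which $\sE$ is constant; then \eqref{enid} forces $\nabla u\equiv0$ and $\Delta d+|\nabla d|^2d\equiv0$, so $u\equiv0$ by the no-slip condition and, by Lemma~\ref{lma:dconst}, $d$ is a unit constant. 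Therefore $\omega(z_0)\subset\cE$.

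It remains to upgrade convergence to the set $\omega(z_0)$ into convergence to a single point. Fix any $z_\infty\in\omega(z_0)\subset\cE$. As shown just before Theorem~\ref{stability}, $z_\infty$ is a normally stable equilibrium, so Theorem~\ref{stability} applies on a neighbourhood of $z_\infty$. Because $\operatorname{dist}(z(t),\omega(z_0))\to0$, the trajectory eventually enters this neighbourhood, and the generalized principle of linearized stability then yields $z(t)\to z_\infty$ in $X_\gamma$ at an exponential rate; in particular $\omega(z_0)$ is the singleton $\{z_\infty\}$. The converse is immediate, since a trajectory converging in $X_\gamma$ is bounded near $t=\infty$, hence eventually bounded in $X_\gamma$.

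The only step demanding genuine care is the LaSalle argument: tying the strict Ljapunov property to the invariance of $\omega(z_0)$ and to the nonlinear eigenvalue analysis of Lemma~\ref{lma:dconst}, so as to trap the entire limit set inside $\cE$. The remaining ingredients — global existence, orbit compactness, and the final passage to a single equilibrium — are direct applications of the machinery of Section~\ref{toolbox} and of the normal stability established for Theorem~\ref{stability}.
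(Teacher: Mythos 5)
Your proof is correct and follows essentially the same route as the paper: Proposition~\ref{thm:global2} for global existence and relative compactness of the orbit, the strict Ljapunov functional $\sE$ together with Lemma~\ref{lma:dconst} to trap $\omega(z_0)$ inside $\cE$, and then the normal stability behind Theorem~\ref{stability} to upgrade convergence to the limit set into convergence to a single equilibrium. You merely spell out details the paper leaves implicit (verification of the compact-embedding hypothesis, the LaSalle invariance argument, and the trivial converse), and you correctly identify that the global-existence step rests on Proposition~\ref{thm:global2}, which the paper cites with a typo as Proposition~\ref{thm:global1}.
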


\bibliographystyle{amsplain}
\bibliography{literature}

\providecommand{\bysame}{\leavevmode\hbox to3em{\hrulefill}\thinspace}
\providecommand{\MR}{\relax\ifhmode\unskip\space\fi MR }
\providecommand{\MRhref}[2]{%
  \href{http://www.ams.org/mathscinet-getitem?mr=#1}{#2}
}
\providecommand{\href}[2]{#2}
\begin{thebibliography}{10}

\bibitem{Ama95}
H.~Amann, \emph{Linear and quasilinear parabolic problems. {V}ol. {I}},
  Monographs in Mathematics, vol.~89, Birkh\"auser Boston Inc., Boston, MA,
  1995, Abstract linear theory.

\bibitem{Ama00}
\bysame, \emph{On the strong solvability of the {N}avier-{S}tokes equations},
  J. Math. Fluid Mech. \textbf{2} (2000), no.~1, 16--98.

\bibitem{Ang90}
S.~B. Angenent, \emph{Nonlinear analytic semiflows}, Proc. Roy. Soc. Edinburgh
  Sect. A \textbf{115} (1990), no.~1-2, 91--107.

\bibitem{chandrasekhar}
S.~Chandrasekhar, \emph{Liquid crystals}, Cambridge University press, 1992.

\bibitem{clementli}
P.~Cl{\'e}ment and S.~Li, \emph{Abstract parabolic quasilinear equations and
  application to a groundwater flow problem}, Adv. Math. Sci. Appl. \textbf{3}
  (1993/94), no.~Special Issue, 17--32.

\bibitem{deGennes}
P.~G. DeGennes and J.~Prost, \emph{The physics of liquid crystals}, Oxford,
  1974.

\bibitem{DHP03}
R.~Denk, M.~Hieber, and J.~Pr{\"u}ss, \emph{{$\mathcal{R}$}-boundedness,
  {F}ourier multipliers and problems of elliptic and parabolic type}, Mem.
  Amer. Math. Soc. \textbf{166} (2003), no.~788, viii+114.

\bibitem{DHP07}
\bysame, \emph{Optimal {$L^p$}-{$L^q$}-estimates for parabolic boundary value
  problems with inhomogeneous data}, Math. Z. \textbf{257} (2007), no.~1,
  193--224.

\bibitem{ericksen62}
J.~L. Ericksen, \emph{Hydrostatic theory of liquid crystals}, Arch. Rational
  Mech. Anal. \textbf{9} (1962), 371--378.

\bibitem{kinderlehrerericksen}
J.~L. Ericksen and D.~Kinderlehrer (eds.), \emph{Theory and applications of
  liquid crystals}, The IMA Volumes in Mathematics and its Applications,
  vol.~5, Springer-Verlag, New York, 1987, Papers from the IMA workshop held in
  Minneapolis, Minn., January 21--25, 1985.

\bibitem{ShortSolonnikov}
M.~Geissert, M.~Hess, M.~Hieber, C.~Schwarz, and K.~Stavrakidis, \emph{Maximal
  {$L^p$}-{$L^q$}-estimates for the {S}tokes equation: a short proof of
  {S}olonnikov's theorem}, J. Math. Fluid Mech. \textbf{12} (2010), no.~1,
  47--60.

\bibitem{Gig85}
Y.~Giga, \emph{Domains of fractional powers of the {S}tokes operator in {$L_r$}
  spaces}, Arch. Rational Mech. Anal. \textbf{89} (1985), no.~3, 251--265.

\bibitem{hardtkinderlehrerlin86}
R.~Hardt, D.~Kinderlehrer, and F.-H. Lin, \emph{Existence and partial
  regularity of static liquid crystal configurations}, Comm. Math. Phys.
  \textbf{105} (1986), no.~4, 547--570.

\bibitem{huwang10}
X.~Hu and D.~Wang, \emph{Global solution to the three-dimensional
  incompressible flow of liquid crystals}, Comm. Math. Phys. \textbf{296}
  (2010), no.~3, 861--880.

\bibitem{HWW12}
T.~Huang, C.~Wang, and H.~Wen, \emph{Strong solutions of the compressible
  nematic liquid crystal flow}, J. Differential Equations \textbf{252} (2012),
  no.~3, 2222--2265.

\bibitem{KPW09}
M.~K{\"o}hne, J.~Pr{\"u}ss, and M.~Wilke, \emph{On quasilinear parabolic
  evolution equations in weighted {$L_p$}-spaces}, J. Evol. Equ. \textbf{10}
  (2010), no.~2, 443--463.

\bibitem{leslie68}
F.~M. Leslie, \emph{Some constitutive equations for liquid crystals}, Arch.
  Rational Mech. Anal. \textbf{28} (1968), no.~4, 265--283.

\bibitem{li-wang}
X.~Li and D.~Wang, \emph{Global solution to the incompressible flow of liquid
  crystals}, J. Differential Equations \textbf{252} (2012), no.~1, 745--767.

\bibitem{linlinwang10}
F.~Lin, J.~Lin, and C.~Wang, \emph{Liquid crystal flows in two dimensions},
  Arch. Ration. Mech. Anal. \textbf{197} (2010), no.~1, 297--336.

\bibitem{lin89}
F.-H. Lin, \emph{Nonlinear theory of defects in nematic liquid crystals; phase
  transition and flow phenomena}, Comm. Pure Appl. Math. \textbf{42} (1989),
  no.~6, 789--814.

\bibitem{linliu95}
F.-H. Lin and C.~Liu, \emph{Nonparabolic dissipative systems modeling the flow
  of liquid crystals}, Comm. Pure Appl. Math. \textbf{48} (1995), no.~5,
  501--537.

\bibitem{linliu96}
\bysame, \emph{Partial regularity of the dynamic system modeling the flow of
  liquid crystals}, Discrete Contin. Dynam. Systems \textbf{2} (1996), no.~1,
  1--22.

\bibitem{Ma13}
S.~Ma, \emph{Classical solutions for the compressible liquid crystal flows with
  nonnegative initial densities}, J. Math. Anal. Appl. \textbf{397} (2013),
  no.~2, 595--618.

\bibitem{MS12}
M.~Meyries and R.~Schnaubelt, \emph{Interpolation, embeddings and traces of
  anisotropic fractional {S}obolev spaces with temporal weights}, J. Funct.
  Anal. \textbf{262} (2012), no.~3, 1200--1229.

\bibitem{Pru03}
J.~Pr{\"u}ss, \emph{Maximal regularity for evolution equations in
  {$L_p$}-spaces}, Conf. Semin. Mat. Univ. Bari (2002), no.~285, 1--39 (2003).

\bibitem{PrSi04}
J.~Pr{\"u}ss and G.~Simonett, \emph{Maximal regularity for evolution equations
  in weighted ${L}_p$-spaces}, Arch.~Math.(Basel) \textbf{82} (2004), no.~5,
  415--431.

\bibitem{PS11}
\bysame, \emph{Analytic solutions for the two-phase {N}avier-{S}tokes equations
  with surface tension and gravity}, Parabolic Problems, Progress in Nonlinear
  Differential Equations and Their Applications, vol.~80, Springer Basel, 2011,
  pp.~507--540.

\bibitem{psz09}
J.~Pr{\"u}ss, G.~Simonett, and R.~Zacher, \emph{On convergence of solutions to
  equilibria for quasilinear parabolic problems}, J. Differential Equations
  \textbf{246} (2009), no.~10, 3902--3931.

\bibitem{Sol77A}
V.~A. Solonnikov, \emph{Estimates for solutions of nonstationary
  {N}avier-{S}tokes equations}, J. Math. Sci. \textbf{8} (1977), no.~4,
  467--529.

\bibitem{Tri78}
H.~Triebel, \emph{Interpolation theory, function spaces, differential
  operators}, North-Holland Mathematical Library, vol.~18, North-Holland
  Publishing Co., Amsterdam, 1978.

\bibitem{Wan11}
C.~Wang, \emph{Well-posedness for the heat flow of harmonic maps and the liquid
  crystal flow with rough initial data}, Arch. Ration. Mech. Anal. \textbf{200}
  (2011), no.~1, 1--19.

\end{thebibliography}

\end{document}